\pgfplotsset{compat=newest} 
\pgfplotsset{plot coordinates/math parser=false}
\theoremstyle{plain}						
\newtheorem{theorem}{Theorem}[section]
\theoremstyle{definition}
\theoremstyle{remark}
\newtheorem{remark}[theorem]{Remark}
\newlength\fwidth
\newlength\xmin
\newlength\xmax
\newlength\ymin
\newlength\ymax
\newcommand\RR{\mathbb{R}}
\newcommand\NN{\mathbb{N}}
\newcommand\hh{{\tfrac{h}{2}}}
\newcommand\half{{\frac12}}
\newcommand\jmo{{j-1}}
\newcommand\jmh{{j-\half}}
\newcommand\jpmh{{j\pm\half}}
\newcommand\jph{{j+\half}}
\newcommand\jpo{{j+1}}
\newcommand\npo{{n+1}}
\newcommand\dx{\mathit{dx}}
\newcommand\dy{\mathit{dy}}
\newcommand\BV{\mathit{BV}}
\renewcommand{\P}{P}
\newcommand{\w}{w}
\renewcommand{\c}{c}
\newcommand\opt{{\text{opt}}}
\newcommand\brho{{\bar\rho}}
\renewcommand{\S}{S}
\newcommand{\qn}{{\nu}}
\newcommand{\nqn}{{R}}
\newcommand{\qw}{{\gamma}}
\newcommand{\qwn}{\qw_\qn}
\newcommand{\qvar}{{y}}
\begin{document}

\title{Maximum principle satisfying CWENO schemes for non-local conservation laws}
\author{Jan Friedrich\footnotemark[1], \; Oliver Kolb\footnotemark[1]}

\footnotetext[1]{University of Mannheim, Department of Mathematics, 68131 Mannheim, Germany (janfriea@mail.uni-mannheim.de, kolb@uni-mannheim.de).}

\maketitle

\begin{abstract}
 Central WENO schemes are a natural candidate for higher-order schemes for non-local conservation laws, since the underlying reconstructions do not only provide single point values of the solution but a complete (high-order) reconstruction in every time step, which is beneficial to evaluate the integral terms.
Recently, in [C.~Chalons et al., \emph{SIAM J. Sci. Comput., 40(1), A288--A305}], Discontinuous Galerkin (DG) schemes and Finite Volume WENO (FV-WENO) schemes have been proposed to obtain high-order approximations for a certain class of non-local conservation laws. In contrast to their schemes, the presented CWENO approach neither requires a very restrictive CFL condition (as the DG methods) nor an additional reconstruction step (as the FV-WENO schemes). Further, by making use of the well-known linear scaling limiter of [X.~Zhang and C.-W.~Shu, \emph{J. Comput. Phys., 229, p.~3091--3120}], our CWENO schemes satisfy a maximum principle for suitable non-local conservation laws.
\end{abstract}

{\bf AMS subject classifications:} 65M08, 65M12

{\bf Keywords:} non-local conservation laws, high-order schemes, central weighted essentially non-oscillatory schemes

\section{Introduction}\label{sec:intro}
We are interested in the numerical solution of conservation laws with non-local flux function, i.e., the flux function does not only depend on the solution in a single point but on the solution in a (spatial) neighborhood of the considered point.
Such type of equations occur within several applications, e.g.\ to model supply chains \cite{armbruster2006continuum}, sedimentation \cite{betancourt2011nonlocal}, conveyor belts \cite{gottlich2014modeling}, crowd motion \cite{colombo2012class} or traffic flow \cite{blandin2016well, GoatinScialanga2016}.
The well-posedness of the Cauchy problem for certain non-local conservation laws has for instance been investigated in \cite{aggarwal2015nonlocal, amorim2015numerical, colombo2011control, blandin2016well, GoatinScialanga2016, ChiarelloGoatin2017}.

Within this work, we consider scalar non-local conservation laws of the form
\begin{equation}\label{eq:nonloc}
 \rho_t + \big( \underbrace{g(\rho) v(\rho * w_\eta)}_{= f(\rho)} \big)_x = 0, \quad x\in\RR, \quad t>0
\end{equation}
with given initial conditions
\begin{equation}\label{eq:initial}
 \rho(0,x) = \rho_0(x), \quad x\in\RR.
\end{equation}
In particular, referring to~\cite{ChalonsGoatinVillada2018}, we will consider a non-local traffic flow model given by
\begin{equation}\label{eq:trafficFlow}
 g(\rho)=\rho, \quad v(\rho) = 1-\rho, \quad (\rho * w_\eta)(t,x) = \int\limits_x^{x+\eta} w_\eta(y-x) \rho(t,y) \dy
\end{equation}
with different non-increasing, non-negative kernel functions $w_\eta$ on $[0,\eta]$ for some $\eta > 0$ such that $\int_0^\eta w_\eta(x)\dx=1$. Further, we consider a non-local sedimentation model given by
\begin{equation}\label{eq:sedimentation}
 g(\rho) = \rho (1-\rho)^\alpha, \quad v(\rho) = (1-\rho)^n, \quad (\rho * w_\eta)(t,x) = \int\limits_{-2\eta}^{2\eta} w_\eta(y) \rho(t,x+y) \dy,
\end{equation}
where $n\ge1$, $\alpha=0$ or $\alpha\ge1$, and $w_\eta$ is a symmetric, non-negative and piecewise smooth kernel function with support on $[-2\eta,2\eta]$, fulfilling $\int_\RR w_\eta(x)\dx=1$.

Solutions to \eqref{eq:nonloc}-\eqref{eq:initial} may contain complex but at the same time non-smooth components. This makes high-resolution numerical schemes desirable but also a very challenging task.
Recently, in~\cite{ChalonsGoatinVillada2018}, Discontinuous Galerkin (DG) schemes and Finite Volume WENO (FV-WENO) schemes have been proposed to obtain high-order approximations for a certain class of non-local conservation laws. The results of the DG schemes are very accurate but they require a very restrictive CFL condition. On the other hand, the presented FV-WENO schemes can be applied with larger time steps but they need an additional reconstruction step and are less accurate. 
Within this work, we propose several high-order central WENO (CWENO) schemes for the numerical solution of non-local conservation laws. We consider CWENO as a natural candidate for this problem class since the underlying reconstructions do not only provide single point values but a complete spatial reconstruction in every time step, which is beneficial to evaluate the integral terms. This way we want to achieve the same accuracy as the DG schemes of~\cite{ChalonsGoatinVillada2018} but without severe restrictions on the CFL condition and also without additional reconstruction steps as the FV-WENO schemes of~\cite{ChalonsGoatinVillada2018}. Further, by making use of the well-known linear scaling limiter of~\cite{ZhangShu2010,ZhangShu2011}, our CWENO schemes satisfy a maximum principle for suitable non-local conservation laws.

This work is organized as follows. In Section~\ref{sec:scheme}, we introduce a class of CWENO schemes for non-local conservation laws of the form~\eqref{eq:nonloc}-\eqref{eq:initial}. The requirements and the proof of a maximum principle are presented in Section~\ref{sec:maxPrinciple}. Section~\ref{sec:results} is devoted to numerical results confirming the stability and the convergence rates of the proposed schemes, and also the fulfillment of the maximum principle.

\section{CWENO schemes for non-local conservation laws}\label{sec:scheme}
The CWENO schemes considered in this work are based on a class of CWENO reconstructions recently presented in~\cite{CraveroPuppoSempliceVisconti2017}. In general, to solve \eqref{eq:nonloc}-\eqref{eq:initial}, we apply a spatial semi-discretization on a uniform grid with spatial mesh size $h$, grid points $x_j = x_0 + jh$ and corresponding finite volumes $I_j = [x_j - \hh, x_j + \hh] = [x_\jmh, x_\jph]$. Accordingly, integration over $I_j$ gives from~\eqref{eq:nonloc}
\begin{equation}\label{eq:semi-disc-exact}
 \frac{d}{dt} \brho(t,x_j) = - \frac1h \left( f(\rho)(t,x_\jph) - f(\rho)(t,x_\jmh)\right),
\end{equation}
an ordinary differential equation for the evolution of the cell averages
\begin{equation}
 \brho(t,x_j) = \frac1h \int\limits_{I_j} \rho(t,x) \dx,
\end{equation}
and from~\eqref{eq:initial} averaged initial conditions
\begin{equation}\label{eq:initialCellAverages}
 \brho(0,x_j) = \frac1h \int\limits_{I_j} \rho_0(x) \dx.
\end{equation}
We will denote the semi-discrete numerical approximation of those cell averages by $\brho_j(t)$, i.e.,
\begin{equation}
 \brho_j(t) \approx \brho(t,x_j).
\end{equation}
To evaluate an approximation of the right-hand side of~\eqref{eq:semi-disc-exact}, we will make use of CWENO reconstructions based on the cell averages $\brho_j(t)$, which is described in the following section. Afterwards, we explain all further ingredients to achieve a fully discrete scheme, beginning with the choice of a suitable quadrature rule to approximate the initial cell averages~\eqref{eq:initialCellAverages} and to evaluate the convolution term contained in~\eqref{eq:semi-disc-exact}.

\subsection{CWENO reconstruction procedure}

For the considered spatial semi-discretization, i.e., to approximate the convolution terms and to evaluate the fluxes at the cell interfaces in~\eqref{eq:semi-disc-exact}, we require a (high-order) reconstruction technique based on cell averages. Since the presented procedure is independent of the time variable, we consider $\rho = \rho(x)$ as function of the spatial variable only within this section to improve readability. Thus, based on cell averages $\bar\rho_j$ over all $I_j$,
\begin{equation}
  \brho_j = \frac1h \int\limits_{I_j} \rho(x) \dx,
\end{equation}
the aim is to reconstruct the underlying function $\rho$ by a piecewise polynomial approximation $\P$.
Within each interval $I_j$, $\P$ is built as convex combination of several polynomials. According to~\cite{CraveroPuppoSempliceVisconti2017}, for a CWENO reconstruction of order $2g+1$ ($g\in\{1,2,3\}$ within this work), we consider in each interval a polynomial $\P_\opt$ of degree $G=2g$ and $m=g+1$ polynomials $\P_1$, \ldots, $\P_m$ of degree $g$. For a fixed cell $I_j$ those fulfill the following properties:
\begin{equation}
 \frac1h \int\limits_{I_{j+l}} \P_\opt(x) \dx = \brho_{j+l} \qquad \forall l\in\{ -g,\ldots,g \}
\end{equation}
and for $k\in\{1,\ldots,m\}$
\begin{equation}
 \frac1h \int\limits_{I_{j-g+k-1+l}} \P_k(x) \dx = \brho_{j-g+k-1+l} \qquad \forall l\in\{ 0,\ldots,g \}.
\end{equation}
For the third-order method CWENO3 ($g=1$) $\P_\opt$ is the unique parabola conserving the three cell averages $\brho_\jmo$, $\brho_j$ and $\brho_\jpo$ over $I_\jmo$, $I_j$ and $I_\jpo$, respectively. $\P_1$ and $\P_2$ are one-sided linear reconstructions conserving $\brho_j$ and $\brho_\jmo$ respectively $\brho_\jpo$. In general, if the underlying data is smooth enough, $\P_\opt$ is a reconstruction of order $2g+1$ and the polynomials $\P_1$, \ldots, $\P_m$ are $(g+1)$th order accurate.

Next, one has to choose positive real coefficients $\c_0,\ldots,\c_m \in (0,1)$ such that $\sum_{k=0}^m \c_k = 1$ and we build
\begin{equation}
 \P_0(x) = \frac{1}{\c_0} \left( \P_\opt(x) - \sum\limits_{k=1}^m \c_k \P_k(x) \right).
\end{equation}

Finally, the reconstruction in $I_j$ is given by
\begin{equation}
 P(x) = \sum\limits_{k=0}^m \w_k \P_k(x)
\end{equation}
with weights
\begin{equation}
 \w_k = \frac{\alpha_k}{\sum\limits_{i=0}^m \alpha_i} \quad \text{with} \quad \alpha_k = \frac{\c_k}{(\S(\P_k) + \varepsilon)^p}
\end{equation}
and suitable smoothness indicator $\S(\P_k)$, where we apply the one of Jiang and Shu \cite{JiangShu1996},
\begin{equation}
 \S(\P_k) = \sum\limits_{l=1}^G h^{2l-1} \int\limits_{I_j} \big( \P_k^{(l)}(x) \big)^2 \dx \qquad k\in\{0,\ldots,m \}. 
\end{equation}
Note that explicit expressions for $S(\P_k)$ based on the given cell averages $\brho_j$ are available.
Further, based on the results in~\cite{Kolb2014,Kolb2016,CraveroPuppoSempliceVisconti2017}, one should use $\varepsilon=\varepsilon(h)=h^q$ with $q\in\{1,2\}$ and $p\ge2$. Actually, we apply $p=2$ and $q=2$ throughout this work and choose the coefficients $\c_k$ according to~\cite{CraveroPuppoSempliceVisconti2017} with $\c_0 = 0.5$.

\subsection{Choice of a suitable quadrature rule}

Another important ingredient for our numerical scheme is a suitable quadrature rule for the initial cell averages~\eqref{eq:initialCellAverages} and the approximation of the convolution terms in~\eqref{eq:semi-disc-exact}. We denote by $\nqn$ the number of quadrature nodes $\qvar_\qn \in [0,1]$ ($\qn\in\{1,\ldots,\nqn\}$) with corresponding weights $\qwn>0$ such that $\sum_{\qn=1}^\nqn \qwn = 1$. To obtain the desired order of accuracy for the entire scheme, we make use of quadrature rules of at least the same order of accuracy as the applied CWENO reconstruction procedure. In particular, we require that the applied quadrature rule is exact for the polynomials of the reconstruction step, which will become important for the maximum principle in Section~\ref{sec:maxPrinciple}. 
Additionally, we later require that one quadrature node lies on the right-hand boundary of the interval. This makes Radau-Legendre quadrature rules a suitable candidate since they achieve the maximum degree of exactness ($2\nqn-2$) under this condition. Without loss of generality we assume $\qvar_\nqn = 1$.

To achieve the desired orders of accuracy, we apply the Radau-Legendre quadrature rules with $R=2,3,4$ for CWENO3 ($g=1$), CWENO5 ($g=2$) and CWENO7 ($g=3$), respectively.
Note that without regard to the maximum principle, which comes with a more restrictive CFL condition, one could as well apply any other quadrature rule of sufficient accuracy to approximate the initial cell averages and to evaluate the convolution term - like the Gauß-Legendre formulas as used in~\cite{ChalonsGoatinVillada2018}.

Based on a given quadrature rule, we directly get an approximation of the initial cell averages
\begin{equation}\label{eq:initialCondDisc}
 \brho(0,x_j) = \frac1h \int\limits_{I_j} \rho_0(x) \dx \approx \sum\limits_{\qn=1}^\nqn \qwn \rho_0(x_\jmh + h \qvar_\qn) = \brho_j(0).
\end{equation}

\subsection{Computation of the convolution term}

Next, we consider the approximation of the convolution term contained in~\eqref{eq:semi-disc-exact}. Therefore, we apply the recently proposed Godunov/upwind type numerical flux function of~\cite{FriedrichKolbGoettlich2018} to achieve the semi-discrete system
\begin{equation}\label{eq:semiDiscreteSystem}
 \frac{d}{dt} \brho_j(t) = - \frac1h \left( V_\jph(t) g(\rho_\jph^-(t)) - V_\jmh(t) g(\rho_\jmh^-(t)) \right).
\end{equation}
Note that one could alternatively apply the (non-local) Lax-Friedrichs type numerical flux function as used in~\cite{ChalonsGoatinVillada2018} - in particular, in case that the requirements for the Godunov type flux function (as $v\ge0$) are not fulfilled.
In~\eqref{eq:semiDiscreteSystem}, all required density values are taken from the CWENO reconstruction of the respective time level.
In particular,
\begin{equation}
 \rho_\jmh^-(t) = \P_\jmo(t,x_\jmh) \quad \text{and} \quad \rho_\jph^-(t) = \P_j(t,x_\jph),
\end{equation}
where $\P_\jmo$ and $\P_j$ are the reconstructed polynomials in the intervals $I_\jmo$ and $I_j$ based on the cell averages at time $t$. The evaluation of $V_\jpmh(t)$ makes use of the above mentioned quadrature rule and reconstructed polynomials in a sufficiently large neighborhood of $I_j$.

For the non-local traffic flow model, with $Nh = \eta$ ($N \in \NN$), the approximate convolution term is given by 
\begin{equation}\label{eq:VjphTraffic}
V_{j+\frac{1}{2}}(t) = v\left(h\sum_{k=0}^{N-1}\sum_{\qn=1}^\nqn \qwn w_\eta^{\qn,k} \rho_{j+k+1}^{\qn}(t) \right)
\end{equation}
with
\begin{equation}
 \rho_{l}^{\qn}(t) = \P_l(t,x_{l-\half} + \qvar_\qn h)
\end{equation}
and
\begin{align}
 w_\eta^{\qn,k} = w_\eta\big((k + \qvar_\qn) h \big). 
\end{align}

Analogously, we have for the non-local sedimentation model
\begin{equation}\label{eq:VjphSedimentation}
V_{j+\frac{1}{2}}(t) = v\left(h\sum_{k=-N}^{N-1}\sum_{\qn=1}^\nqn \qwn w_\eta^{\qn,k} \rho_{j+k+1}^{\qn}(t) \right)
\end{equation}
with $2Nh = 4\eta$. 

\begin{remark}
 Note that the choice $Nh=\eta$ for the traffic flow model and $2Nh=4\eta$ for the sedimentation model is only applied here to simplify notation. To keep the formal order of accuracy for $\eta/h \not\in \mathbb{N}$ (or $2\eta/h \not\in \mathbb{N}$), one has to apply the provided formulas with $N=\lfloor \eta/h\rfloor$ (or $N=\lfloor 2\eta/h \rfloor$) and further use the underlying quadrature rule within the resulting subintervals $[x_{j+\frac{1}{2}}+Nh,x_{j+\frac{1}{2}}+\eta]$ at the right-hand boundary for the traffic flow model and at both boundaries $[x_{j+\frac{1}{2}}-\eta,x_{j+\frac{1}{2}}-Nh]$ and $[x_{j+\frac{1}{2}}+Nh,x_{j+\frac{1}{2}}+\eta]$ for the sedimentation model. Beside the convergence rate also all later results concerning the maximum principle stay valid this way.
\end{remark}

\begin{remark}
 In the case that the applied quadrature rule is not exact for the considered kernel function $w_\eta$, we replace $w_\eta^{\qn,k}$ by
 \begin{equation}
  \tilde w_\eta^{\qn,k} = \frac{w_\eta^{\qn,k}}{h \sum\limits_k \sum\limits_\qn \qwn w_\eta^{\qn,k}}
 \end{equation}
 so that
 \begin{equation}
  h \sum\limits_k \sum\limits_\qn \qwn \tilde w_\eta^{\qn,k} = 1.
 \end{equation}
 This way, since the kernel function is non-negative and we assumed $\qwn > 0$ for the quadrature weights, we ensure that $v$ in \eqref{eq:VjphTraffic} and~\eqref{eq:VjphSedimentation} is evaluated at a convex combination of the reconstructed density values $\rho_l^\qn$, which can be controlled to lie in a desired range (see Section~\ref{sec:maxPrinciple}). Further, replacing $w_\eta^{\qn,k}$ by $\tilde w_\eta^{\qn,k}$ does not change the order of accuracy for the approximate ``mean density'' since
 \begin{equation}
  h \sum\limits_k \sum\limits_\qn \qwn w_\eta^{\qn,k} \approx \int\limits_0^\eta w_\eta(x) \dx = 1
 \end{equation}
 for the traffic flow model and analogously with $\pm 2\eta$ as integration bounds for the sedimentation model.
 In particular, $\tilde w_\eta^{\qn,k} = w_\eta^{\qn,k}$ if the applied quadrature rule is exact for the considered kernel function.
\end{remark}

\subsection{Time discretization}

Within the previous sections, we have described a complete spatial semi-discretization of the original problem~\eqref{eq:nonloc}-\eqref{eq:initial}, resulting in an ordinary differential equation~\eqref{eq:semiDiscreteSystem} for the approximate cell averages $\brho_j(t)$ together with initial conditions~\eqref{eq:initialCondDisc}. Thus it remains to specify an appropriate time integration scheme. For the third-order method CWENO3, we apply the well-known third-order TVD Runge-Kutta scheme of~\cite{GottliebShu1998} with CFL restrictions according to~\cite{FriedrichKolbGoettlich2018} and the analysis in Section~\ref{sec:maxPrinciple}. Unfortunately, no one-step TVD/SSP Runge-Kutta schemes (with solely non-negative coefficients) are available for orders larger than four~\cite{RuuthSpiteri2002}. Only a fourth-order method is possible, unless one can provide a suitable semi-discretization for the backwards-in-time problem of the original problem.

Therefore, we will apply non-TVD Runge-Kutta methods of orders five~\cite[§3.2.5]{Butcher2008} and seven~\cite[p.\ 196]{Butcher2008} below to demonstrate the convergence rates of the proposed CWENO schemes CWENO5 and CWENO7. 
Nevertheless, to guarantee the maximum principle as stated in the following section, SSP schemes must be applied. Here we use the mentioned third-order scheme of~\cite{GottliebShu1998} for CWENO3 and the two-step Runge-Kutta methods~\cite{ketcheson2011strong} of order five with four stages and of order seven with nine stages\footnote{For the implementation of these methods we used the coefficients given by \url{http://www.sspsite.org/msrk.html}.} for CWENO5 and CWENO7, respectively. For the initialization of the two-step Runge-Kutta methods, we compute the first step with the fourth-order SSP scheme of~\cite{SpiteriRuuth2002} (with five stages) and a suitably reduced step size.

\section{Maximum principle}\label{sec:maxPrinciple}
The aim of this section is to modify the CWENO schemes presented so far in such a way that, while keeping the high order of accuracy, the resulting numerical approximations fulfill a maximum principle. Certainly, such a property is only desirable if the analytical solution of the underlying problem fulfills a maximum principle as it is the case for the considered non-local traffic flow problem. The following analysis covers a more general case, namely under the assumptions
\begin{align}\label{eq:assumptionsMaxPrinciple}
g\in C^1([0,\rho_{\max}];\mathbb{R}^+) \qquad &  \text{with} \ \ g'\geq 0, \phantom{\int} \notag\\
v\in C^1([0,\rho_{\max}];\mathbb{R}^+) \qquad & \text{with} \ \ v'\leq 0,\\
w_\eta\in C^1([0,\eta];\mathbb{R}^+) \qquad & \text{with} \ \ w_\eta'\leq 0 \ \ \text{and} \ \int_0^\eta w_\eta(x) dx=1\notag.
\end{align}

Under these conditions, for $\rho_0 \in \BV(\mathbb{R};[0,\rho_{\max}])$, the following maximum principle holds (see e.g.~\cite{ChiarelloGoatin2017}):
\begin{equation}
 \inf_\mathbb{R}\{\rho_0\}\leq \rho(t,x) \leq \sup_\mathbb{R}\{\rho_0\} \quad \text{for a.e.}\ x \in \RR,\ t>0.
\end{equation}

To achieve the same property for the approximate cell averages computed by our CWENO schemes, two modifications are necessary. First, by applying the well-known linear scaling limiter of~\cite{ZhangShu2010,ZhangShu2011}, we achieve that the resulting modified CWENO reconstruction only delivers points within the desired range - without worsening the convergence rate. Secondly, also similar to the results in~\cite{ZhangShu2010,ZhangShu2011}, we have to adapt the CFL condition to ensure the maximum principle for the fully discrete scheme.

\subsection{Linear scaling limiter of Zhang and Shu}

As noted above, we apply the linear scaling limiter of~\cite{ZhangShu2010,ZhangShu2011} to force our CWENO reconstructions into the desired range without worsening the convergence rate. Therefore, we assume that the analytical solution as well as the approximate cell averages $\brho_j$ at a certain time lie in the interval $[\rho_m,\rho_M]$. Then, we modify any reconstructed polynomial $\P_j(x)$ in the following way:
\begin{equation}
 \tilde \P_j(x) = \brho_j + \theta ( \P_j(x) - \brho_j ) \quad \text{with} \quad \theta = \min\left\{ \left\vert \frac{\rho_M-\brho_j}{M_j - \brho_j} \right\vert, \left\vert \frac{\rho_m-\brho_j}{m_j - \brho_j} \right\vert \right\},
\end{equation}
where
\begin{equation}
 M_j = \max\limits_{x\in I_j} \P_j(x) \quad \text{and} \quad m_j = \min\limits_{x\in I_j} \P_j(x).
\end{equation}
Note that it is sufficient to determine $M_j$ and $m_j$ based on evaluations of $\P_j$ in all points that are actually used within the scheme, i.e., in our case, evaluations of $\P_j$ in the quadrature nodes and in $x_\jph$ (for the Godunov type numerical flux function).

\subsection{Main result}

Based on the modified CWENO reconstructions, we will next prove that approximate solutions of the semi-discretized problem~\eqref{eq:semiDiscreteSystem} computed by the first-order Euler forward scheme fulfill a maximum principle. Then, the same property directly follows for arbitrary SSP methods (with solely non-negative coefficients) for a suitably adapted CFL condition. Note that the proof of~\cite{ZhangShu2011} cannot be applied in the context of non-local problems since we do not deal with monotone schemes here.

\begin{theorem}\label{theorem:maxPrinciple}
Let the convolution term in~\eqref{eq:semiDiscreteSystem} be given by~\eqref{eq:VjphTraffic}, where the underlying quadrature rule is exact for polynomials of the applied CWENO reconstruction and $\qvar_\nqn=1$, and let $\brho_j^n \in [\rho_m,\rho_M]$  ($j\in\mathbb{Z}$) as well as $\rho_l^\qn \in [\rho_m,\rho_M]$ for all reconstructed densities at a certain time $t^n$. Further, we assume~\eqref{eq:assumptionsMaxPrinciple} and 
\begin{equation}
 h\sum_{k=0}^{N-1}\sum_{\qn=1}^\nqn \qwn w_\eta^{\qn,k} = 1.
\end{equation}
Then, all approximate cell averages $\brho_j^\npo$ ($j\in\mathbb{Z}$) at time $t^\npo = t^n + \tau$ computed by a single Euler forward step fulfill
\begin{align*}
\rho_m \leq \brho_j^\npo \leq \rho_M,
\end{align*}
if the following CFL condition holds:
\begin{align}
 \tau&\leq\frac{\qw_\nqn h}{\qw_\nqn h w_\eta(0)\Vert v'\Vert\Vert g\Vert+ \Vert v\Vert \Vert g'\Vert},\label{CFL}
\end{align}
where $\Vert.\Vert$ denotes the $L^\infty$ norm over the respective domain.
\end{theorem}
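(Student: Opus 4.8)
The plan is to write the Euler forward update for a single cell average and manipulate it into a convex combination of quantities already known to lie in $[\rho_m,\rho_M]$. Starting from~\eqref{eq:semiDiscreteSystem} discretized in time, I would write
\begin{equation*}
\brho_j^\npo = \brho_j^n - \frac{\tau}{h}\left( V_\jph^n g(\rho_\jph^{-,n}) - V_\jmh^n g(\rho_\jmh^{-,n}) \right),
\end{equation*}
where $V_\jpmh^n$ are the discrete convolution factors from~\eqref{eq:VjphTraffic} and the $\rho^{-}$ values come from the (limited) reconstruction. Because the limiter forces every reconstructed point value into $[\rho_m,\rho_M]$, and the exact quadrature together with the normalization $h\sum_k\sum_\qn \qwn w_\eta^{\qn,k}=1$ makes the argument of $v$ a genuine convex combination of such values, the key structural fact I would establish is that each $V_\jpmh^n$ lies in $[v(\rho_M),v(\rho_m)]$ (using $v'\le 0$), and in particular is non-negative.

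The heart of the argument is to exhibit $\brho_j^\npo$ as a convex combination. The natural decomposition is to recognize that the cell average $\brho_j^n$ itself equals the quadrature-weighted sum of the reconstructed point values $\rho_j^{\qn,n}$ inside $I_j$ (this is exactly where exactness of the quadrature for the reconstruction polynomials is used), and that the assumption $\qvar_\nqn = 1$ guarantees the right-interface value $\rho_\jph^{-,n}=\rho_j^{\nqn,n}$ coincides with the last quadrature node. I would then single out the $\qn=\nqn$ term, writing
\begin{equation*}
\brho_j^n = \qw_\nqn \rho_\jph^{-,n} + \sum_{\qn=1}^{\nqn-1} \qwn \rho_j^{\qn,n},
\end{equation*}
and absorb the flux terms into the $\qw_\nqn$-weighted piece. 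The goal is to show $\brho_j^\npo = \sum_\qn \qwn \Phi_\qn$ with each $\Phi_\qn \in [\rho_m,\rho_M]$; the terms with $\qn < \nqn$ are simply $\rho_j^{\qn,n}$ and already lie in range, so everything reduces to controlling a single modified term built from $\rho_\jph^{-,n}$, $\rho_\jmh^{-,n}$ and the two velocity factors.

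The main obstacle is to bound that remaining term. After substituting, it takes the schematic form
\begin{equation*}
\Phi_\nqn = \rho_\jph^{-,n} - \frac{\tau}{\qw_\nqn h}\left( V_\jph^n g(\rho_\jph^{-,n}) - V_\jmh^n g(\rho_\jmh^{-,n}) \right),
\end{equation*}
and I must show $\rho_m \le \Phi_\nqn \le \rho_M$ under the CFL bound~\eqref{CFL}. The strategy is to split the velocity difference and the flux difference using the mean value theorem: writing $V_\jph^n g(\rho_\jph^{-,n}) - V_\jmh^n g(\rho_\jmh^{-,n})$ as $V_\jph^n\big(g(\rho_\jph^{-,n})-g(\rho_\jmh^{-,n})\big) + g(\rho_\jmh^{-,n})\big(V_\jph^n-V_\jmh^n\big)$ isolates a factor of $g'$ (bounded by $\Vert g'\Vert$, with $V\le\Vert v\Vert$) and a factor coming from the velocity difference. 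For the latter I would use $v'$ and the fact that the convolution arguments of $V_\jph^n$ and $V_\jmh^n$ differ only in the contribution of the single nearest cell weighted by $w_\eta(0)$, which after applying the mean value theorem to $v$ and bounding the kernel difference by $w_\eta(0)$ produces the $\Vert v'\Vert\,\Vert g\Vert\,w_\eta(0)$ term. Collecting the two contributions gives exactly the denominator in~\eqref{CFL}, so the CFL condition is precisely what makes the coefficient of the bracket small enough to keep $\Phi_\nqn$ monotone as a function of the input densities and hence trapped in $[\rho_m,\rho_M]$. I expect the careful accounting of how the shift by one cell in the convolution stencil produces only the $w_\eta(0)$ boundary term — exploiting the monotonicity $w_\eta'\le 0$ and the telescoping of the shifted kernel sums — to be the delicate bookkeeping step, and the place where the non-local structure genuinely differs from the classical Zhang–Shu local argument.
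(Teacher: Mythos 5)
Your overall architecture (quadrature exactness to write $\brho_j^n=\sum_\qn\qwn\rho_j^\qn$, singling out the node $\qvar_\nqn=1$, mean value theorem on $g$ and $v$, telescoping of the shifted kernel sums via $w_\eta'\le0$) matches the paper's, but the central reduction step has a genuine gap. You propose to absorb the entire flux difference into the single term
\begin{equation*}
\Phi_\nqn = \rho_\jph^{-,n} - \frac{\tau}{\qw_\nqn h}\left( V_\jph^n g(\rho_\jph^{-,n}) - V_\jmh^n g(\rho_\jmh^{-,n}) \right)
\end{equation*}
and to show $\Phi_\nqn\in[\rho_m,\rho_M]$ while leaving $\Phi_\qn=\rho_j^\qn$ for $\qn<\nqn$. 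This cannot work: after Abel summation the velocity difference is bounded by
$V_\jmh-V_\jph\le \Vert v'\Vert\, h\sum_{\qn}\qwn w_\eta^{\qn,0}(\rho_M-\rho_j^\qn)$,
a sum over \emph{all} quadrature nodes of cell $I_j$, and the contributions $(\rho_M-\rho_j^\qn)$ for $\qn<\nqn$ are not controlled by $(\rho_M-\rho_j^\nqn)$. Concretely, take $\rho_j^\qn=\rho_m$ for $\qn<\nqn$, $\rho_j^\nqn=\rho_{j-1}^\nqn=\rho_M$, and $\rho_{j+k}^\qn=\rho_M$ for $k\ge1$: then $V_\jmh>V_\jph$ while $g(\rho_\jpmh^-)=g(\rho_M)$, so $\Phi_\nqn>\rho_M$ strictly, yet $\brho_j^\npo\le\rho_M$ still holds because the excess is compensated by the slack in the unmodified terms $\Phi_\qn=\rho_m$. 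Relatedly, your claim that the arguments of $V_\jph$ and $V_\jmh$ ``differ only in the contribution of the single nearest cell'' is not accurate — the whole stencil shifts by one cell, and only the combination of telescoping, $w_\eta'\le0$ and $\rho\le\rho_M$ collapses the bound to the nearest-cell weights $w_\eta^{\qn,0}$, one for \emph{each} node $\qn$.

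The fix is exactly the paper's distributed decomposition: pair each term $\lambda h\qwn w_\eta^{\qn,0}\Vert v'\Vert\Vert g\Vert(\rho_M-\rho_j^\qn)$ with the corresponding summand $\qwn\rho_j^\qn$ of the cell average, so that
\begin{equation*}
\brho_j^\npo \le \sum_{\qn=1}^{\nqn-1}\qwn\bigl(\rho_j^\qn+\theta_\qn(\rho_M-\rho_j^\qn)\bigr)
+\qw_\nqn\bigl(\rho_j^\nqn+\theta_\nqn(\rho_M-\rho_j^\nqn)\bigr)
\end{equation*}
with $\theta_\qn=\lambda h w_\eta^{\qn,0}\Vert v'\Vert\Vert g\Vert$ for $\qn<\nqn$ and $\theta_\nqn=\lambda\bigl(h w_\eta^{\nqn,0}\Vert v'\Vert\Vert g\Vert+\Vert v\Vert\Vert g'\Vert/\qw_\nqn\bigr)$; only the $\Vert v\Vert\Vert g'\Vert$ part, coming from $g(\rho_M)-g(\rho_\jph^-)$, is charged to the last node, which is why only $\qw_\nqn$ appears in the CFL denominator. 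The CFL condition \eqref{CFL} guarantees all $\theta_\qn\in[0,1]$, giving the upper bound; the lower bound is analogous. Note this is an inequality, not an exact convex-combination identity for $\brho_j^\npo$ — as the authors stress, the scheme is not monotone, which is precisely why the Zhang--Shu-style ``one cell-boundary term in range'' reduction you attempt does not transfer to the non-local setting.
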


\begin{proof}
Before considering $\brho_j^{n+1}$, we show some general inequalities, where we leave out the time dependency for a better readability, except for the cell averages $\brho_j^n$ and $\brho_j^\npo$.

First, \eqref{eq:VjphTraffic} and the mean value theorem yield
\begin{align}
V_{j-\frac{1}{2}}-V_{j+\frac{1}{2}}&=v\left(h\sum_{k=0}^{N-1}\sum_{\qn=1}^\nqn \qwn w_\eta^{\qn,k}\rho_{j+k}^\qn\right)-v\left(h\sum_{k=0}^{N-1}\sum_{\qn=1}^\nqn \qwn w_\eta^{\qn,k}\rho_{j+k+1}^\qn\right)\nonumber\\
&=v'(\xi)h\left(\sum_{k=0}^{N-1}\sum_{\qn=1}^\nqn \qwn w_\eta^{\qn,k}\rho_{j+k}^\qn-\sum_{k=0}^{N-1}\sum_{\qn=1}^\nqn \qwn w_\eta^{\qn,k}\rho_{j+k+1}^\qn\right)\nonumber\\
&=-v'(\xi)h\left(-\sum_{\qn=1}^\nqn \qwn w_\eta^{\qn,0}\rho_{j}^\qn+ \sum_{k=1}^{N-1}\sum_{\qn=1}^\nqn \qwn (w_\eta^{\qn,k-1}-w_\eta^{\qn,k})\rho_{j+k}^\qn+\sum_{\qn=1}^\nqn \qwn w_\eta^{\qn,N-1}\rho_{j+N}^\qn\right).
\label{starteq}
\end{align}
Since $v'(\xi)\le0$, $\qwn > 0$, $w_\eta(x)\ge0$ and monotonically decreasing (and $\rho_l^\qn \le \rho_M$), we get
\begin{align}\label{eq:inequForUpperBound}
V_{j-\frac{1}{2}}-V_{j+\frac{1}{2}} &\leq -v'(\xi)h\left(-\sum_{\qn=1}^\nqn \qwn w_\eta^{\qn,0}\rho_{j}^\qn+ \sum_{k=1}^{N-1}\sum_{\qn=1}^\nqn \qwn (w_\eta^{\qn,k-1}-w_\eta^{\qn,k})\rho_M+\sum_{\qn=1}^\nqn \qwn w_\eta^{\qn,N-1}\rho_M\right)\nonumber\\
&= -v'(\xi)h\left(-\sum_{\qn=1}^\nqn \qwn w_\eta^{\qn,0}\rho_{j}^\qn+ \sum_{\qn=1}^{R}\qwn (w_\eta^{\qn,0}-w_\eta^{\qn,N-1})\rho_M+\sum_{\qn=1}^\nqn \qwn w_\eta^{\qn,N-1}\rho_M\right)\nonumber\\
&\leq \Vert v'\Vert h \sum_{\qn=1}^\nqn \qwn w_\eta^{\qn,0}(\rho_M-\rho_{j}^\qn).
\end{align}

Analogously, we obtain from \eqref{starteq} 
\begin{align}\label{eq:inequForLowerBound}
V_{j-\frac{1}{2}}-V_{j+\frac{1}{2}} &\geq \Vert v'\Vert h \sum_{\qn=1}^\nqn \qwn w_\eta^{\qn,0}(\rho_m-\rho_{j}^\qn).
\end{align}

By multiplying inequality \eqref{eq:inequForUpperBound} by $g(\rho_M)$, subtracting $V_{j+\frac{1}{2}} g(\rho_{j+\frac{1}{2}}^-)$ and applying the mean value theorem, we obtain
\begin{align}\label{eq:inequDV}
V_{j-\frac{1}{2}} g(\rho_M)-V_{j+\frac{1}{2}} g(\rho_{j+\frac{1}{2}}^-) &\leq \Vert v'\Vert \Vert g \Vert h \sum_{\qn=1}^\nqn \qwn w_\eta^{\qn,0}(\rho_M-\rho_{j}^\qn)+ V_{j+\frac{1}{2}} (g(\rho_M)-g(\rho_{j+\frac{1}{2}}^-))\notag\\
&\leq \Vert v'\Vert \Vert g \Vert h \sum_{\qn=1}^\nqn \qwn w_\eta^{\qn,0}(\rho_M-\rho_{j}^\qn)+ \Vert v\Vert \Vert g'\Vert(\rho_M-\rho_{j+\frac{1}{2}}^-)\notag\\
&= h \sum_{\qn=1}^{\nqn-1} \qwn w_\eta^{\qn,0} \Vert v'\Vert \Vert g \Vert (\rho_M-\rho_j^\qn) + \qw_\nqn \left( h w_\eta^{\nqn,0} \Vert v'\Vert \Vert g \Vert +\frac{\Vert v\Vert \Vert g'\Vert}{\qw_\nqn} \right) (\rho_M-\rho_j^\nqn).
\end{align}
In the final step we used that $\rho_{j+\frac{1}{2}}^- = \rho_j^\nqn$ (since $\qvar_\nqn = 1$).
Applying an Euler forward step to compute $\brho_j^\npo$ means
\begin{equation}
\brho_j^{n+1} = \brho_j^n - \lambda\left(V_\jph g(\rho_\jph^-) - V_\jmh g(\rho_\jmh^-)\right)
\end{equation}
with $\lambda = \frac{\tau}{h}$. Since $V_\jmh\ge0$ under the given assumptions, $\rho_\jmh^- \le \rho_M$ and $g'\ge0$, we get
\begin{equation}\label{eq:estRhojnpo}
\brho_j^{n+1} \le \brho_j^n + \lambda\left(V_{j-\frac{1}{2}}^n g(\rho_M)-V_{j+\frac{1}{2}}^n g(\rho_{j+\frac{1}{2}}^-)\right).
\end{equation}
Since we assumed a quadrature rule that is exact for the polynomials applied in the CWENO reconstruction procedure, we know
\begin{equation}
 \brho_j^n = \sum\limits_{\qn=1}^\nqn \qwn \rho_j^\qn
\end{equation}
and get from~\eqref{eq:estRhojnpo} together with~\eqref{eq:inequDV}
\begin{multline}
 \brho_j^\npo \le \sum\limits_{\qn=1}^{\nqn-1} \qwn \bigg( \rho_j^\qn + \underbrace{\lambda h w_\eta^{\qn,0} \Vert v'\Vert \Vert g \Vert}_{\in[0,1]} (\rho_M-\rho_j^\qn) \bigg)\\
 + \qw_\nqn \bigg( \rho_j^\nqn + \underbrace{\lambda \bigg( h w_\eta^{\nqn,0} \Vert v'\Vert \Vert g \Vert + \frac{\Vert v\Vert \Vert g'\Vert}{\qw_\nqn} \bigg)}_{\in[0,1]} (\rho_M - \rho_j^\nqn) \bigg).
\end{multline}
Now, under the CFL condition \eqref{CFL} (note the monotonicity of $w_\eta$), we achieve $\brho_j^\npo \le \rho_M$.

Analogously, we obtain from~\eqref{eq:inequForLowerBound}
\begin{multline}
V_{j-\frac{1}{2}}^n g(\rho_{m})-V_{j+\frac{1}{2}}^n g(\rho_{j+\frac{1}{2}}^-)\geq h \sum_{\qn=1}^{\nqn-1} \qwn w_\eta^{\qn,0} \Vert v'\Vert \Vert g \Vert (\rho_m-\rho_j^\qn)\\
+ \qw_\nqn \left( h w_\eta^{\nqn,0} \Vert v'\Vert \Vert g \Vert +\frac{\Vert v\Vert \Vert g'\Vert}{\qw_\nqn} \right) (\rho_m-\rho_j^\nqn)
\end{multline}
to show $\brho_j^{n+1} \geq \rho_{m}$, which completes the proof.
\end{proof}

Given Theorem~\ref{theorem:maxPrinciple} and a SSP method for the time integration, we may directly deduce a maximum principle for the complete fully discrete scheme, since the discretized initial conditions~\eqref{eq:initialCondDisc} fulfill the maximum principle and the applied SSP scheme can be rewritten as composition of Euler forward steps.

\begin{theorem}\label{theorem:maxPrincipleSSP}
 Let a SSP method with SSP constant $c_{\mathit{SSP}}$ be given and approximate initial conditions according to~\eqref{eq:initialCondDisc}.
 Then, under the assumptions of Theorem~\ref{theorem:maxPrinciple} together with the CFL condition
\begin{equation}\label{eq:CFLSSP}
 \tau \leq c_{\mathit{SSP}} \frac{\qw_\nqn h}{\qw_\nqn h w_\eta(0)\Vert v'\Vert\Vert g\Vert+ \Vert v\Vert \Vert g'\Vert},
\end{equation}
the approximate solutions based on the presented CWENO semi-discretizations with linear scaling limiter satisfy
\begin{equation}
 \rho_m = \inf_\RR\{\rho_0\} \leq \brho_j(t^n) \leq \rho_M = \sup_\RR\{\rho_0\} \qquad \forall j\in\mathbb{Z} 
\end{equation}
for all $t^n = n \tau$, $n\in\NN$.
\end{theorem}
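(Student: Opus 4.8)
The plan is to reduce the statement to Theorem~\ref{theorem:maxPrinciple} by exploiting the defining structure of SSP methods together with an induction on the time level $n$. The set we wish to keep invariant is the componentwise constraint $\brho_j \in [\rho_m,\rho_M]$ for all $j\in\mathbb{Z}$; this set is convex, and Theorem~\ref{theorem:maxPrinciple} already shows it is invariant under a single forward Euler step with the CFL number appearing in~\eqref{CFL}. The SSP machinery will then transfer this invariance to the full method under the relaxed step size~\eqref{eq:CFLSSP}, because any SSP method is, by definition, a composition of convex combinations of forward Euler steps.

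First I would settle the base case $n=0$. By~\eqref{eq:initialCondDisc}, each initial cell average $\brho_j(0)=\sum_{\qn=1}^\nqn \qwn\,\rho_0(x_\jmh+h\qvar_\qn)$ is a convex combination (the $\qwn$ are positive and sum to one) of point values of $\rho_0$, all lying in $[\inf_\RR\{\rho_0\},\sup_\RR\{\rho_0\}]=[\rho_m,\rho_M]$. Hence $\brho_j(0)\in[\rho_m,\rho_M]$ for every $j$, and the maximum principle holds at $t^0$.

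For the inductive step I would assume $\brho_j(t^n)\in[\rho_m,\rho_M]$ for all $j$ and write the SSP method in Shu--Osher form, so that each internal stage $u^{(i)}$ is a convex combination $\sum_k \alpha_{ik}\big( u^{(k)} + \tfrac{\beta_{ik}}{\alpha_{ik}}\,\tau\, L(u^{(k)})\big)$, where $L$ denotes the semi-discrete spatial operator given by the right-hand side of~\eqref{eq:semiDiscreteSystem}, the coefficients satisfy $\alpha_{ik}\ge0$, $\sum_k\alpha_{ik}=1$, and $\beta_{ik}/\alpha_{ik}\le 1/c_{\mathit{SSP}}$. Each term in parentheses is precisely a forward Euler step of effective size $\tfrac{\beta_{ik}}{\alpha_{ik}}\tau\le \tau/c_{\mathit{SSP}}$, which by~\eqref{eq:CFLSSP} does not exceed the forward Euler threshold~\eqref{CFL}. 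The decisive point is that $L$ already incorporates the CWENO reconstruction together with the linear scaling limiter: as long as the input cell averages of a stage lie in $[\rho_m,\rho_M]$, the limiter forces every reconstructed point value $\rho_l^\qn$ into $[\rho_m,\rho_M]$ as well, so that \emph{both} hypotheses of Theorem~\ref{theorem:maxPrinciple} are met and each internal forward Euler step returns a state in $[\rho_m,\rho_M]$. Convexity of the interval then keeps every stage, and in particular $\brho_j(t^\npo)$, within $[\rho_m,\rho_M]$, closing the induction. For the two-step Runge--Kutta methods used for CWENO5 and CWENO7 the same reasoning applies, the stored previous values $\brho_j(t^{n-1})$ also being in range by the induction hypothesis (the two-step data being initialized by the SSP starter mentioned above).

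The main obstacle I anticipate is not the convex-combination bookkeeping but verifying that the hypotheses of Theorem~\ref{theorem:maxPrinciple} genuinely hold at \emph{every} internal stage. Theorem~\ref{theorem:maxPrinciple} requires in-range point values $\rho_l^\qn$, not merely in-range cell averages, and this is exactly what the linear scaling limiter supplies, provided its input cell average itself lies in $[\rho_m,\rho_M]$. The argument must therefore be threaded so that the limiter is applied at the start of each substep to the (in-range, by convexity) stage values before $L$ is evaluated; this interplay between the convexity furnished by the SSP structure and the range-enforcement furnished by the limiter is the heart of the proof.
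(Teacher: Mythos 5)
Your argument is correct and follows essentially the same route as the paper, which justifies Theorem~\ref{theorem:maxPrincipleSSP} only by the one-sentence observation that the discretized initial conditions satisfy the bounds and that the SSP scheme decomposes into convex combinations of forward Euler steps, each covered by Theorem~\ref{theorem:maxPrinciple}. Your elaboration --- in particular the explicit remark that the linear scaling limiter must be reapplied at every internal stage so that the point-value hypothesis $\rho_l^\qn\in[\rho_m,\rho_M]$ of Theorem~\ref{theorem:maxPrinciple} holds there, not just at the beginning of the step --- correctly fills in the detail the paper leaves implicit.
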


\begin{remark}\label{remark:timeStep}
 Applying Radau-Legendre quadrature rules with $\nqn=2,3,4$ nodes for CWENO3, CWENO5 and CWENO7 yields $\qw_\nqn=\frac14, \frac19, \frac1{16}$, respectively. Thus, to ensure the maximum principle a priori comes with a certain price regarding the time step size restriction (factor 4, 9 and 16, respectively).
Of course, one can alternatively only apply the inexpensive linear scaling limiter to ensure high-order reconstructions within the given bounds and project the result of each evolution step in time back into the feasible domain, resulting in a violation of the conservation property, or rerun the evolution step with a smaller step size only in case of a (significant) violation of the maximum principle.
 
Further note that for other numerical flux functions than the applied Godunov type flux function, e.g.\ Lax-Friedrichs, the reconstructed polynomials are typically evaluated at both ends of each interval, which makes Gauß-Lobatto quadrature rules a reasonable choice (as in~\cite{ZhangShu2011} for hyperbolic conservation laws). The latter could also be applied together with the Godunov type numerical flux function and come with similar step size restrictions as the Radau-Legendre formulas, $\qw_\nqn = \frac1{\nqn (\nqn-1)}$ instead of $\qw_\nqn = \frac1{\nqn^2}$, but in our case we would have to choose a by one larger $\nqn$ to fulfill the accuracy requirement, leading to a stricter step size restriction.
\end{remark}

\section{Numerical results}\label{sec:results}
Within this section we consider four numerical test cases, where the first three directly refer to~\cite{ChalonsGoatinVillada2018}. We begin with a non-smooth example to test the stability as well as accuracy of the proposed CWENO schemes in the presence of discontinuities. Afterwards, we consider two smooth examples to check the formal convergence rates. Each one of the smooth scenarios is based on the non-local traffic flow model and the non-local sedimentation model. The fourth example is designed to show that the proposed CWENO schemes with linear scaling limiter fulfill the maximum principle while keeping the high convergence rates.

\subsection{Non-smooth test case}

This non-smooth test case (including the applied spatial mesh size $h$) is taken from~\cite{ChalonsGoatinVillada2018}: For the non-local traffic flow model~\eqref{eq:nonloc}-\eqref{eq:trafficFlow} we consider the initial conditions
\begin{equation}
 \rho_0(x) = \begin{cases}
              0.95 & \text{for} \ x\in[-0.5,0.4],\\
              0.05 & \text{otherwise},
             \end{cases}
\end{equation}
together with periodic boundary conditions and compute the numerical solution at time $T=0.1$ with $w_\eta(x) = 3(\eta^2-x^2)/(2\eta^3)$ for $\eta=0.1$. Figure~\ref{fig:nonsmooth} shows the numerical results for $h=1/800$ and $\tau \approx 0.9 h/(h w_\eta(0)+1)$ with CWENO3, CWENO5 and CWENO7 in comparison to a reference solution computed with CWENO7 and $h=1/3200$. All schemes accurately capture the discontinuities, where CWENO7 gives the sharpest result (see zoomed view of the dash-dotted region on the right-hand side of Figure~\ref{fig:nonsmooth}).

\begin{figure}[hbt]
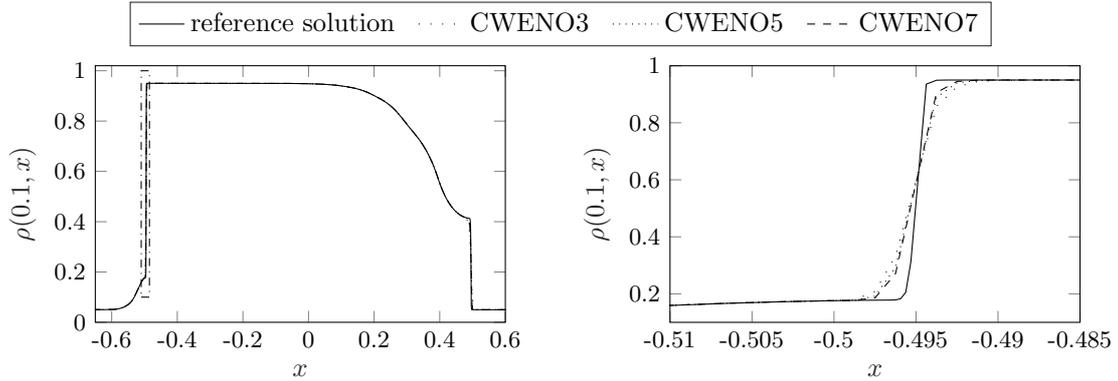

\centering
\begin{tikzpicture} 
    \begin{axis}[%
    hide axis,
    xmin=10,
    xmax=50,
    ymin=0,
    ymax=0.4,
    legend style={draw=white!15!black,legend cell align=left,legend columns= 4}
    ]
    \addlegendimage{black}
    \addlegendentry{reference solution \ \ };
    \addlegendimage{black, loosely dotted}
    \addlegendentry{CWENO3 \ \ };
        \addlegendimage{black, dotted}
    \addlegendentry{CWENO5 \ \ };
        \addlegendimage{black, dashed}
    \addlegendentry{CWENO7};
    \end{axis}
\end{tikzpicture}\\
\setlength{\fwidth}{0.38\textwidth}
\setlength{\xmin}{-0.65pt}
\setlength{\xmax}{0.6pt}
\setlength{\ymin}{0pt}
\setlength{\ymax}{1.02pt} 
\input{test1a}
\setlength{\xmin}{-0.51pt}
\setlength{\xmax}{-0.485pt}
\setlength{\ymin}{0.1pt}
\setlength{\ymax}{1.0pt}
\input{test1azoom}

 \caption{Numerical solution of the non-local traffic flow model with non-smooth initial conditions for CWENO3, CWENO5 and CWENO7 with $h=1/800$. Zoomed view of the dash-dotted region on the right.}
 \label{fig:nonsmooth}
\end{figure}

\subsection{Convergence tests}

As noted above, also the following two test cases are adapted from~\cite{ChalonsGoatinVillada2018} and are supposed to demonstrate that the formal order of accuracy is achieved for the proposed CWENO schemes.

\subsubsection{Non-local traffic flow model}

For the traffic flow model we consider the initial conditions
\begin{equation}
 \rho_0(x) = 0.5 + 0.4 \sin (\pi x)
\end{equation}
in the interval $[-1,1]$ (with periodic boundary conditions) and are interested in the numerical solution at time $T=0.15$. Here, we make use of different kernel functions,
\begin{align*}
w_\eta^1(x)=1/\eta, \qquad w_\eta^2(x)=2(\eta-x)/\eta^2, \qquad w_\eta^3(x)=3(\eta^2-x^2)/(2\eta^3),
\end{align*}
all with $\eta=0.2$. The mesh size $h$ is given by $h=1/20\cdot 2^{-n}$ with $n\in\{0,\ldots,5\}$. The time step size $\tau$ is given by $\tau\approx 0.9 h/(h w_\eta(0)+1)$. Tables~\ref{tab:convTest1L1} and~\ref{tab:convTest1Linfty} show the $L^1$ and $L^\infty$ errors of the corresponding numerical schemes in comparison to a reference solution computed with CWENO7 and $h=1/20\cdot 2^{-7}$. Obviously, CWENO3, CWENO5 and CWENO7 achieve their formal convergence rates (c.r.) for all considered kernels - until machine precision is reached.

\begin{table}[hbt]
\centering
\caption{$L^1$ errors and convergence rates for the smooth traffic flow example}
\label{tab:convTest1L1}
    \begin{tabular}{c c | c  c | c  c | c  c }
    &  &\multicolumn{2}{c}{CWENO3} & \multicolumn{2}{c}{CWENO5} & \multicolumn{2}{c}{CWENO7}\\
    \hline
& $n$& $L^1$ error & c.r.& $L^1$ error & c.r.& $L^1$ error & c.r.\\
    \hline\hline
\multirow{6}{*}{$w_\eta^1$}
&0&2.33e-04&-&2.58e-06&-&2.91e-07&-\\
&1&2.53e-05&3.20&9.35e-08&4.78&2.26e-09&7.01\\
&2&2.84e-06&3.15&3.10e-09&4.91&1.62e-11&7.12\\
&3&3.42e-07&3.06&9.95e-11&4.96&1.20e-13&7.07\\
&4&4.17e-08&3.04&3.14e-12&4.98&1.35e-15&6.48\\
&5&5.16e-09&3.01&9.89e-14&4.99&5.84e-16&1.21\\
\hline
\multirow{6}{*}{$w_\eta^2$}
&0&2.36e-04&-&2.99e-06&-&3.53e-07&-\\
&1&2.48e-05&3.25&1.11e-07&4.75&2.73e-09&7.01\\
&2&2.80e-06&3.15&3.67e-09&4.92&1.94e-11&7.13\\
&3&3.36e-07&3.06&1.17e-10&4.97&1.45e-13&7.07\\
&4&4.10e-08&3.03&3.70e-12&4.99&2.38e-15&5.93\\
&5&5.09e-09&3.01&1.16e-13&4.99&1.51e-15&0.65\\
\hline
\multirow{6}{*}{$w_\eta^3$}
&0&2.22e-04&-&2.86e-06&-&3.26e-07&-\\
&1&2.39e-05&3.22&1.06e-07&4.76&2.58e-09&6.98\\
&2&2.73e-06&3.13&3.50e-09&4.92&1.85e-11&7.12\\
&3&3.35e-07&3.03&1.12e-10&4.96&1.38e-13&7.07\\
&4&4.14e-08&3.02&3.54e-12&4.99&2.32e-15&5.89\\
&5&5.18e-09&3.00&1.12e-13&4.99&1.50e-15&0.63\\       
\hline\hline
    \end{tabular}    
\end{table}

\begin{table}[hbt]
\centering
\caption{$L^\infty$ errors and convergence rates for the smooth traffic flow example}
\label{tab:convTest1Linfty}
    \begin{tabular}{c c | c  c | c  c | c  c }
    &  &\multicolumn{2}{c}{CWENO3} & \multicolumn{2}{c}{CWENO5} & \multicolumn{2}{c}{CWENO7}\\
    \hline
& $n$& $L^\infty$ error & c.r.& $L^\infty$ error & c.r.& $L^\infty$ error & c.r.\\
    \hline\hline
\multirow{6}{*}{$w_\eta^1$}
&0&7.35e-04&-&1.13e-05&-&1.39e-06&-\\
&1&8.53e-05&3.11&4.09e-07&4.78&1.09e-08&6.99\\
&2&9.61e-06&3.15&1.32e-08&4.95&7.56e-11&7.17\\
&3&1.15e-06&3.06&4.22e-10&4.97&5.49e-13&7.11\\
&4&1.40e-07&3.04&1.33e-11&4.99&4.33e-15&6.99\\
&5&1.74e-08&3.01&4.16e-13&5.00&1.89e-15&1.20\\
\hline
\multirow{6}{*}{$w_\eta^2$}
&0&7.09e-04&-&1.39e-05&-&1.72e-06&-\\
&1&7.73e-05&3.20&5.05e-07&4.78&1.37e-08&6.97\\
&2&8.81e-06&3.13&1.64e-08&4.94&9.68e-11&7.15\\
&3&1.06e-06&3.06&5.21e-10&4.98&7.05e-13&7.10\\
&4&1.29e-07&3.03&1.64e-11&4.99&5.83e-15&6.92\\
&5&1.60e-08&3.01&5.14e-13&5.00&6.44e-15&-0.14\\
\hline
\multirow{6}{*}{$w_\eta^3$}
&0&6.99e-04&-&1.34e-05&-&1.61e-06&-\\
&1&7.73e-05&3.18&4.77e-07&4.81&1.27e-08&6.98\\
&2&8.77e-06&3.14&1.56e-08&4.94&9.07e-11&7.13\\
&3&1.05e-06&3.06&4.95e-10&4.98&6.61e-13&7.10\\
&4&1.29e-07&3.03&1.55e-11&4.99&5.55e-15&6.90\\
&5&1.59e-08&3.01&4.87e-13&5.00&5.77e-15&-0.06\\     
\hline\hline
    \end{tabular}    
\end{table}

\subsubsection{Non-local sedimentation model}

For the sedimentation model we consider
\begin{equation}
 v(\rho)=(1-\rho)^3, \qquad g(\rho)=\rho(1-\rho)
\end{equation}
and initial conditions
\begin{equation}
 \rho_0(x)=0.8\sin (\pi x)^{10}
\end{equation}
in the interval $[0,1]$ (with zero boundary conditions). The convolution kernel is given by a truncated parabola,
\begin{equation}
 w_\eta(x) = \tfrac1\eta K(\tfrac{x}{\eta})
\end{equation}
with
\begin{equation}
 K(y) = \begin{cases}
         \frac38 \left( 1 - \frac{y^2}{4} \right) & \text{for} \ \vert y\vert \le 2,\\
         0 & \text{otherwise}.
        \end{cases}
\end{equation}
We are interested in the numerical solution at time $T=0.04$ for $\eta=0.05$.
The mesh size $h$ is given by $h=1/20\cdot 2^{-n}$ with $n\in\{0,\dots,5\}$. The time step size $\tau$ is given by $\tau\approx 0.9 h/(3\eta w_\eta(0)+1)$. Tables \ref{tab:convTestSediL1} and \ref{tab:convTestSediLinfty} show the $L^1$ and $L^\infty$ errors of the corresponding numerical schemes in comparison to a reference solution computed with CWENO7 and $h=1/20\cdot 2^{-7}$. Again, the formal convergence rates are achieved, even though rather late for CWENO7 here and the final precision due to machine accuracy seems to be lower.

\begin{table}[hbt]
\centering
\caption{$L^1$ errors and convergence rates for the sedimentation example}
\label{tab:convTestSediL1}
    \begin{tabular}{ c | c  c | c  c | c  c }
      &\multicolumn{2}{c}{CWENO3} & \multicolumn{2}{c}{CWENO5} & \multicolumn{2}{c}{CWENO7}\\
    \hline
 $n$& $L^1$ error & c.r.& $L^1$ error & c.r.& $L^1$ error & c.r.\\
    \hline\hline
    0&2.78e-03&-&1.01e-03&-&4.91e-04&-\\
1&5.17e-04&2.43&3.42e-05&4.89&1.55e-05&4.98\\
2&7.42e-05&2.80&1.10e-06&4.95&2.43e-07&6.00\\
3&7.93e-06&3.23&4.07e-08&4.76&2.31e-09&6.72\\
4&7.68e-07&3.37&1.35e-09&4.91&1.78e-11&7.02\\
5&7.66e-08&3.33&4.39e-11&4.94&1.03e-12&4.11\\
\hline\hline
    \end{tabular}    
\end{table}

\begin{table}[hbt]
\centering
\caption{$L^\infty$ errors and convergence rates for the sedimentation example}
\label{tab:convTestSediLinfty}
    \begin{tabular}{ c | c  c | c  c | c  c }
     &\multicolumn{2}{c}{CWENO3} & \multicolumn{2}{c}{CWENO5} & \multicolumn{2}{c}{CWENO7}\\
    \hline
 $n$& $L^\infty$ error & c.r.& $L^\infty$ error & c.r.& $L^\infty$ error & c.r.\\
    \hline\hline
    0&1.37e-02&-&6.21e-03&-&2.53e-03&-\\
1&3.85e-03&1.83&3.68e-04&4.08&2.02e-04&3.65\\
2&6.69e-04&2.53&1.52e-05&4.59&5.61e-06&5.17\\
3&7.11e-05&3.23&7.40e-07&4.36&5.61e-08&6.64\\
4&7.11e-06&3.32&2.56e-08&4.85&4.32e-10&7.02\\
5&7.73e-07&3.20&8.44e-10&4.92&2.41e-10&0.84\\
\hline\hline
    \end{tabular}    
\end{table}

\subsection{Maximum principle}

Within the final example, we want to demonstrate the maximum principle and that the accuracy of the CWENO schemes is maintained also when the limiter is active. Therefore, we consider again the non-local traffic flow model with $w_\eta(x) = 3(\eta^2-x^2)/(2\eta^3)$. The initial conditions are piecewise defined by
\begin{equation}
 \rho_0(x) = \begin{cases}
              0 & \text{for} \ 0 \le x \le 1/8,\\
              q_1(x) & \text{for} \ 1/8 \le x \le 3/8,\\
              1 & \text{for} \ 3/8 \le x \le 5/8,\\
              q_2(x) & \text{for} \ 5/8 \le x \le 7/8,\\
              0 & \text{for} \ 7/8 \le x \le 1.\\
             \end{cases}
\end{equation}

\begin{figure}[hbt]
\centering
\setlength{\fwidth}{0.5\textwidth}
\input{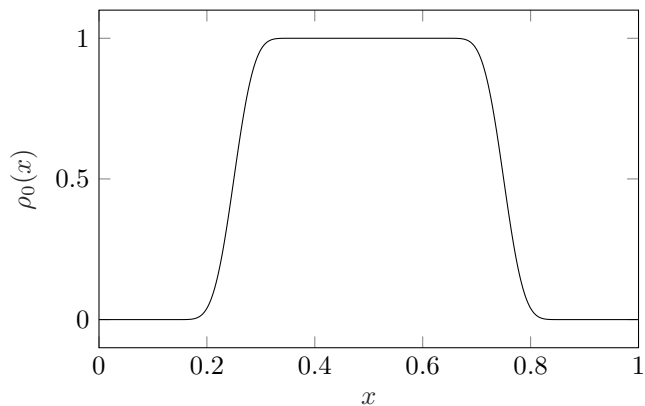}
\caption{Initial conditions $\rho_0(x)$ to test the maximum principle.}
\end{figure}

$q_1$ and $q_2$ are polynomials of degree $17$, chosen in such a way that the transitions are sufficiently smooth. We consider periodic boundary conditions and are interested in the numerical solution at time $T=0.05$ for $\eta=0.05$. The mesh size $h$ is given by $h=1/20\cdot 2^{-n}$ with $n\in\{0,\dots,6\}$. The time step size $\tau$ for CWENO3 in combination with the third order TVD Runge-Kutta method of~\cite{GottliebShu1998} is given by $\tau\approx 0.9 \cdot \frac14 h/(h w_\eta(0)/4+1)$. For CWENO5 and CWENO7 we apply the two-step Runge-Kutta methods of~\cite{ketcheson2011strong} with the corresponding orders with four and nine stages, respectively. 
The step sizes are then given by $\tau\approx 0.9 \cdot 0.21354 \cdot \frac19 h/(h w_\eta(0)/9+1)$ and $\tau\approx 0.9 \cdot 0.12444 \cdot \frac{1}{16} h/(h w_\eta(0)/16+1)$. 
The initial step of CWENO5 and CWENO7 is done by applying the five-stage fourth-order TVD Runge-Kutta method of~\cite{SpiteriRuuth2002} with $\tilde{\tau}\approx \min\{\tau,1.35 \cdot \frac19 h/(h w_\eta(0)/9+1)\cdot 2^{-5n/4}\} $ and $\tilde{\tau}\approx \min\{\tau,1.35 \cdot \frac{1}{16} h/(h w_\eta(0)/16+1)\cdot 2^{-7n/4}\}$, respectively.

Tables \ref{tab:convTestMaxL1} and \ref{tab:convTestMaxLinf} show the $L^1$ and $L^\infty$ errors of the corresponding numerical schemes in comparison to a reference solution computed with CWENO7 and $h=1/20\cdot 2^{-7}$. Due to the steep gradients, convergence rates close to the formal convergence rates are observed rather late here. 

\begin{table}[hbt]
\centering
\caption{$L^1$ errors and convergence rates for the maximum principle test}
\label{tab:convTestMaxL1}
    \begin{tabular}{ c | c  c | c  c | c  c }
      &\multicolumn{2}{c}{CWENO3} & \multicolumn{2}{c}{CWENO5} & \multicolumn{2}{c}{CWENO7}\\
    \hline
 $n$& $L^1$ error & c.r.& $L^1$ error & c.r.& $L^1$ error & c.r.\\
    \hline\hline

0&1.19e-02&-&8.65e-03&-&6.97e-03&-\\
1&5.71e-03&1.06&3.56e-03&1.28&2.69e-03&1.37\\
2&1.67e-03&1.78&5.65e-04&2.66&3.67e-04&2.87\\
3&4.02e-04&2.05&6.62e-05&3.09&1.91e-05&4.26\\
4&7.18e-05&2.49&3.45e-06&4.26&5.82e-07&5.04\\
5&7.89e-06&3.19&1.14e-07&4.92&5.28e-09&6.78\\
6&8.10e-07&3.28&3.76e-09&4.92&3.83e-11&7.11\\
\hline\hline
    \end{tabular}    
\end{table}

\begin{table}[hbt]
\centering
\caption{$L^\infty$ errors and convergence rates for the maximum principle test}
\label{tab:convTestMaxLinf}
    \begin{tabular}{ c | c  c | c  c | c  c }
     &\multicolumn{2}{c}{CWENO3} & \multicolumn{2}{c}{CWENO5} & \multicolumn{2}{c}{CWENO7}\\
    \hline
 $n$& $L^\infty$ error & c.r.& $L^\infty$ error & c.r.& $L^\infty$ error & c.r.\\
    \hline\hline

0&9.08e-02&-&8.73e-02&-&8.55e-02&-\\
1&7.75e-02&0.23&6.09e-02&0.52&5.21e-02&0.71\\
2&3.98e-02&0.96&1.65e-02&1.89&1.14e-02&2.19\\
3&1.64e-02&1.28&3.14e-03&2.39&1.28e-03&3.16\\
4&3.21e-03&2.35&1.79e-04&4.13&4.26e-05&4.91\\
5&3.94e-04&3.03&6.81e-06&4.72&5.02e-07&6.41\\
6&3.83e-05&3.36&2.42e-07&4.82&3.56e-09&7.14\\
\hline\hline
    \end{tabular} 
\end{table}

Figure~\ref{fig:maxprinciple} shows a comparison of the numerical solution of CWENO3 for $h=0.025$ once with and once without applying the linear scaling limiter. While slight violations of the bounds can be observed when the limiter is not applied, the density stays within $[0,1]$ when the limiter is used.
\begin{remark}
While the linear scaling limiter is very important to preserve the maximum principle, the additional restrictive factor $\gamma_R$ for the time step $\tau$ in Theorems~\ref{theorem:maxPrinciple} and~\ref{theorem:maxPrincipleSSP} seems to be relaxable. For both CWENO5 and CWENO7, the solution in the test case above stays within $[0,1]$ (up to machine precision) if this factor is neglected. For CWENO7 even (additionally) neglecting the SSP constant leads to similar results. Both observations give rise to apply rather aggressive strategies for the time stepping as mentioned in Remark~\ref{remark:timeStep}. Similarly, applying non-SSP Runge-Kutta methods in the final example only leads to very small violations of the maximum principle (as long as the linear scaling limiter is applied).
\end{remark}

\begin{figure}[hbt]
\centering
\begin{tikzpicture} 
    \begin{axis}[%
    hide axis,
    xmin=10,
    xmax=50,
    ymin=0,
    ymax=0.4,
    legend style={draw=white!15!black,legend cell align=left,legend columns= 4}
    ]
    \addlegendimage{black}
    \addlegendentry{reference solution \ \ };
    \addlegendimage{black, dashed}
    \addlegendentry{with limiter \ \ };
        \addlegendimage{black, dotted}
    \addlegendentry{without limiter \ \ };
        \addlegendimage{black, dashdotted}
    \addlegendentry{boundaries};
    \end{axis}
\end{tikzpicture}\\
\setlength{\fwidth}{0.35\textwidth}
\setlength{\xmin}{0pt}
\setlength{\xmax}{1pt}
\setlength{\ymin}{-0.05pt}
\setlength{\ymax}{1.05pt}
%
\begin{tikzpicture}

\begin{axis}[%
width=0.951\fwidth,
height=0.6\fwidth,
at={(0\fwidth,0\fwidth)},
scale only axis,
xmin=\xmin,
xmax=\xmax,
xlabel style={font=\color{white!15!black}},
xlabel={$x$},
ymin=\ymin,
ymax=\ymax,
ylabel style={font=\color{white!15!black}},
ylabel={$\rho(0.05,x)$},
axis background/.style={fill=white}
]
\addplot [color=black, forget plot]
  table[row sep=crcr]{%
0	-0\\
0.209375	0.00027818183424122\\
0.21328125	0.000802363097889058\\
0.21640625	0.00179874022333992\\
0.21875	0.00323670113470365\\
0.2203125	0.00475334437147379\\
0.221875	0.00694224034759605\\
0.2234375	0.0100831804026456\\
0.225	0.0145591792859787\\
0.2265625	0.0208826097968142\\
0.228125	0.0297168615484908\\
0.2296875	0.0418817294715312\\
0.23125	0.0583240408484049\\
0.2328125	0.0800312919682258\\
0.234375	0.107873200160257\\
0.23671875	0.16215579179168\\
0.2390625	0.230598704649644\\
0.24375	0.391634792055665\\
0.2484375	0.546335580414454\\
0.2515625	0.632387213755732\\
0.2546875	0.703244975878291\\
0.2578125	0.760587412482815\\
0.2609375	0.80673863822444\\
0.2640625	0.843897612018343\\
0.2671875	0.873902410021497\\
0.2703125	0.898215510031726\\
0.2734375	0.917979527804515\\
0.2765625	0.934084115075036\\
0.2796875	0.947224701192842\\
0.2828125	0.957949041648148\\
0.2859375	0.966692599833737\\
0.2890625	0.973805007718573\\
0.2921875	0.979569776218268\\
0.2953125	0.984219009079331\\
0.2984375	0.987944440889874\\
0.3015625	0.990905763095759\\
0.30546875	0.9937349128692\\
0.309375	0.995794572022979\\
0.31328125	0.997260859005243\\
0.31796875	0.99843695993795\\
0.32421875	0.999327607875041\\
0.33203125	0.999807710614336\\
0.34609375	0.999991036642234\\
0.44765625	0.999925088595217\\
0.49140625	0.999499437351189\\
0.5140625	0.998749016013915\\
0.53046875	0.997647043935033\\
0.54296875	0.996265218958489\\
0.55390625	0.994483545675228\\
0.56328125	0.992376594891443\\
0.571875	0.98983885633677\\
0.5796875	0.986910163294105\\
0.58671875	0.98366803690288\\
0.59375	0.979752102698065\\
0.6	0.975620819998495\\
0.60625	0.970796094055435\\
0.6125	0.96519787835623\\
0.61875	0.95874590975625\\
0.625	0.951360553359397\\
0.63125	0.942962956971311\\
0.6375	0.933475405502963\\
0.64375	0.92282295084538\\
0.65	0.910936675172344\\
0.65625	0.89775759946298\\
0.6625	0.883239562545208\\
0.66875	0.86735001636264\\
0.67578125	0.847810418192902\\
0.6828125	0.826497699918297\\
0.68984375	0.803414992377428\\
0.69765625	0.775715461515136\\
0.70546875	0.745907671248371\\
0.7140625	0.710785737232982\\
0.7234375	0.669863736072832\\
0.73359375	0.622752500326528\\
0.74453125	0.569203842027855\\
0.75703125	0.505063092126632\\
0.77265625	0.42168455363464\\
0.82578125	0.135265861023909\\
0.8328125	0.101213608308171\\
0.83828125	0.0772110670204007\\
0.84296875	0.058985507946604\\
0.846875	0.0457540061678665\\
0.85078125	0.0344533069603814\\
0.8546875	0.0251209349531811\\
0.8578125	0.0190293584567782\\
0.8609375	0.0140756898154095\\
0.8640625	0.0101497269781319\\
0.8671875	0.00712114249181339\\
0.8703125	0.00485036847641718\\
0.8734375	0.00319850277236222\\
0.87734375	0.00180683672745996\\
0.88203125	0.000835932175186516\\
0.88828125	0.000250264328813588\\
0.8984375	1.88333256603279e-05\\
0.95625	-0\\
1	-0\\
};
\addplot [color=black, dashed, forget plot]
  table[row sep=crcr]{%
0	0\\
0.175	2.34867398951621e-06\\
0.2	0.00514881288862612\\
0.225	0.08720711297234\\
0.25	0.513763558601893\\
0.275	0.909045478663762\\
0.3	0.986933822354289\\
0.325	0.999176525690553\\
0.35	0.999990992914201\\
0.475	0.999772961236622\\
0.5	0.999314655433536\\
0.525	0.998086307463236\\
0.55	0.995071757677994\\
0.575	0.988354580242635\\
0.6	0.974815108079359\\
0.625	0.950121682519233\\
0.65	0.909303049858424\\
0.675	0.848254039015601\\
0.7	0.765113571920161\\
0.725	0.661267620413087\\
0.75	0.539904343969883\\
0.775	0.408515386432242\\
0.8	0.267124358625273\\
0.825	0.131046519350334\\
0.85	0.050450017230097\\
0.875	0.0149233020081314\\
0.9	0.00157578754075893\\
0.925	-0\\
1	0\\
};
\addplot [color=black,dashdotted, forget plot]
  table[row sep=crcr]{%
0	1\\
1	1\\
};
\addplot [color=black,dashdotted, forget plot]
  table[row sep=crcr]{%
0	0\\
1	0\\
};
\addplot [color=black,dotted, forget plot]
  table[row sep=crcr]{%
0	3.46392521104466e-05\\
0.125	1.53266403988273e-05\\
0.15	0.000345262091693277\\
0.175	-0.00214944920211257\\
0.2	0.00526581803432724\\
0.225	0.0884558976911836\\
0.25	0.513972295133306\\
0.275	0.908919179123845\\
0.3	0.98702998317204\\
0.325	0.999398631073434\\
0.35	1.00003491671549\\
0.475	0.999786086298343\\
0.5	0.999328270659967\\
0.525	0.998087786153036\\
0.55	0.995038471062434\\
0.575	0.988276192175517\\
0.6	0.974724777223462\\
0.625	0.950104314979853\\
0.65	0.909373811256746\\
0.675	0.848259057138382\\
0.7	0.764879012922437\\
0.725	0.660873647686457\\
0.75	0.54025296888248\\
0.775	0.40871806584276\\
0.8	0.26687840897118\\
0.825	0.131001706098345\\
0.85	0.0507731647872509\\
0.875	0.0158740897204337\\
0.9	0.00221850336925744\\
0.925	-0.00105176732009449\\
1	3.46392521104466e-05\\
};
\end{axis}
\end{tikzpicture}
\setlength{\xmin}{0.16pt}
\setlength{\xmax}{0.2pt}
\setlength{\ymin}{-0.01pt}
\setlength{\ymax}{0.01pt}
%
\begin{tikzpicture}

\begin{axis}[%
width=0.951\fwidth,
height=0.6\fwidth,
at={(0\fwidth,0\fwidth)},
scale only axis,
xmin=\xmin,
xmax=\xmax,
xlabel style={font=\color{white!15!black}},
xlabel={$x$},
ymin=\ymin,
ymax=\ymax,
ylabel style={font=\color{white!15!black}},
ylabel={$\rho(0.05,x)$},
axis background/.style={fill=white}
]
\addplot [color=black, forget plot]
  table[row sep=crcr]{%
0	-0\\
0.209375	0.00027818183424122\\
0.21328125	0.000802363097889058\\
0.21640625	0.00179874022333992\\
0.21875	0.00323670113470365\\
0.2203125	0.00475334437147379\\
0.221875	0.00694224034759605\\
0.2234375	0.0100831804026456\\
0.225	0.0145591792859787\\
0.2265625	0.0208826097968142\\
0.228125	0.0297168615484908\\
0.2296875	0.0418817294715312\\
0.23125	0.0583240408484049\\
0.2328125	0.0800312919682258\\
0.234375	0.107873200160257\\
0.23671875	0.16215579179168\\
0.2390625	0.230598704649644\\
0.24375	0.391634792055665\\
0.2484375	0.546335580414454\\
0.2515625	0.632387213755732\\
0.2546875	0.703244975878291\\
0.2578125	0.760587412482815\\
0.2609375	0.80673863822444\\
0.2640625	0.843897612018343\\
0.2671875	0.873902410021497\\
0.2703125	0.898215510031726\\
0.2734375	0.917979527804515\\
0.2765625	0.934084115075036\\
0.2796875	0.947224701192842\\
0.2828125	0.957949041648148\\
0.2859375	0.966692599833737\\
0.2890625	0.973805007718573\\
0.2921875	0.979569776218268\\
0.2953125	0.984219009079331\\
0.2984375	0.987944440889874\\
0.3015625	0.990905763095759\\
0.30546875	0.9937349128692\\
0.309375	0.995794572022979\\
0.31328125	0.997260859005243\\
0.31796875	0.99843695993795\\
0.32421875	0.999327607875041\\
0.33203125	0.999807710614336\\
0.34609375	0.999991036642234\\
0.44765625	0.999925088595217\\
0.49140625	0.999499437351189\\
0.5140625	0.998749016013915\\
0.53046875	0.997647043935033\\
0.54296875	0.996265218958489\\
0.55390625	0.994483545675228\\
0.56328125	0.992376594891443\\
0.571875	0.98983885633677\\
0.5796875	0.986910163294105\\
0.58671875	0.98366803690288\\
0.59375	0.979752102698065\\
0.6	0.975620819998495\\
0.60625	0.970796094055435\\
0.6125	0.96519787835623\\
0.61875	0.95874590975625\\
0.625	0.951360553359397\\
0.63125	0.942962956971311\\
0.6375	0.933475405502963\\
0.64375	0.92282295084538\\
0.65	0.910936675172344\\
0.65625	0.89775759946298\\
0.6625	0.883239562545208\\
0.66875	0.86735001636264\\
0.67578125	0.847810418192902\\
0.6828125	0.826497699918297\\
0.68984375	0.803414992377428\\
0.69765625	0.775715461515136\\
0.70546875	0.745907671248371\\
0.7140625	0.710785737232982\\
0.7234375	0.669863736072832\\
0.73359375	0.622752500326528\\
0.74453125	0.569203842027855\\
0.75703125	0.505063092126632\\
0.77265625	0.42168455363464\\
0.82578125	0.135265861023909\\
0.8328125	0.101213608308171\\
0.83828125	0.0772110670204007\\
0.84296875	0.058985507946604\\
0.846875	0.0457540061678665\\
0.85078125	0.0344533069603814\\
0.8546875	0.0251209349531811\\
0.8578125	0.0190293584567782\\
0.8609375	0.0140756898154095\\
0.8640625	0.0101497269781319\\
0.8671875	0.00712114249181339\\
0.8703125	0.00485036847641718\\
0.8734375	0.00319850277236222\\
0.87734375	0.00180683672745996\\
0.88203125	0.000835932175186516\\
0.88828125	0.000250264328813588\\
0.8984375	1.88333256603279e-05\\
0.95625	-0\\
1	-0\\
};
\addplot [color=black, dashed, forget plot]
  table[row sep=crcr]{%
0	0\\
0.175	2.34867398951621e-06\\
0.2	0.00514881288862612\\
0.225	0.08720711297234\\
0.25	0.513763558601893\\
0.275	0.909045478663762\\
0.3	0.986933822354289\\
0.325	0.999176525690553\\
0.35	0.999990992914201\\
0.475	0.999772961236622\\
0.5	0.999314655433536\\
0.525	0.998086307463236\\
0.55	0.995071757677994\\
0.575	0.988354580242635\\
0.6	0.974815108079359\\
0.625	0.950121682519233\\
0.65	0.909303049858424\\
0.675	0.848254039015601\\
0.7	0.765113571920161\\
0.725	0.661267620413087\\
0.75	0.539904343969883\\
0.775	0.408515386432242\\
0.8	0.267124358625273\\
0.825	0.131046519350334\\
0.85	0.050450017230097\\
0.875	0.0149233020081314\\
0.9	0.00157578754075893\\
0.925	-0\\
1	0\\
};
\addplot [color=black,dashdotted, forget plot]
  table[row sep=crcr]{%
0	1\\
1	1\\
};
\addplot [color=black,dashdotted, forget plot]
  table[row sep=crcr]{%
0	0\\
1	0\\
};
\addplot [color=black,dotted, forget plot]
  table[row sep=crcr]{%
0	3.46392521104466e-05\\
0.125	1.53266403988273e-05\\
0.15	0.000345262091693277\\
0.175	-0.00214944920211257\\
0.2	0.00526581803432724\\
0.225	0.0884558976911836\\
0.25	0.513972295133306\\
0.275	0.908919179123845\\
0.3	0.98702998317204\\
0.325	0.999398631073434\\
0.35	1.00003491671549\\
0.475	0.999786086298343\\
0.5	0.999328270659967\\
0.525	0.998087786153036\\
0.55	0.995038471062434\\
0.575	0.988276192175517\\
0.6	0.974724777223462\\
0.625	0.950104314979853\\
0.65	0.909373811256746\\
0.675	0.848259057138382\\
0.7	0.764879012922437\\
0.725	0.660873647686457\\
0.75	0.54025296888248\\
0.775	0.40871806584276\\
0.8	0.26687840897118\\
0.825	0.131001706098345\\
0.85	0.0507731647872509\\
0.875	0.0158740897204337\\
0.9	0.00221850336925744\\
0.925	-0.00105176732009449\\
1	3.46392521104466e-05\\
};
\end{axis}
\end{tikzpicture}%
\setlength{\xmin}{0.89pt}
\setlength{\xmax}{0.93pt}
\setlength{\ymin}{-0.01pt}
\setlength{\ymax}{0.01pt}
%
\begin{tikzpicture}

\begin{axis}[%
width=0.951\fwidth,
height=0.6\fwidth,
at={(0\fwidth,0\fwidth)},
scale only axis,
xmin=\xmin,
xmax=\xmax,
xlabel style={font=\color{white!15!black}},
xlabel={$x$},
ymin=\ymin,
ymax=\ymax,
ylabel style={font=\color{white!15!black}},
ylabel={$\rho(0.05,x)$},
axis background/.style={fill=white}
]
\addplot [color=black, forget plot]
  table[row sep=crcr]{%
0	-0\\
0.209375	0.00027818183424122\\
0.21328125	0.000802363097889058\\
0.21640625	0.00179874022333992\\
0.21875	0.00323670113470365\\
0.2203125	0.00475334437147379\\
0.221875	0.00694224034759605\\
0.2234375	0.0100831804026456\\
0.225	0.0145591792859787\\
0.2265625	0.0208826097968142\\
0.228125	0.0297168615484908\\
0.2296875	0.0418817294715312\\
0.23125	0.0583240408484049\\
0.2328125	0.0800312919682258\\
0.234375	0.107873200160257\\
0.23671875	0.16215579179168\\
0.2390625	0.230598704649644\\
0.24375	0.391634792055665\\
0.2484375	0.546335580414454\\
0.2515625	0.632387213755732\\
0.2546875	0.703244975878291\\
0.2578125	0.760587412482815\\
0.2609375	0.80673863822444\\
0.2640625	0.843897612018343\\
0.2671875	0.873902410021497\\
0.2703125	0.898215510031726\\
0.2734375	0.917979527804515\\
0.2765625	0.934084115075036\\
0.2796875	0.947224701192842\\
0.2828125	0.957949041648148\\
0.2859375	0.966692599833737\\
0.2890625	0.973805007718573\\
0.2921875	0.979569776218268\\
0.2953125	0.984219009079331\\
0.2984375	0.987944440889874\\
0.3015625	0.990905763095759\\
0.30546875	0.9937349128692\\
0.309375	0.995794572022979\\
0.31328125	0.997260859005243\\
0.31796875	0.99843695993795\\
0.32421875	0.999327607875041\\
0.33203125	0.999807710614336\\
0.34609375	0.999991036642234\\
0.44765625	0.999925088595217\\
0.49140625	0.999499437351189\\
0.5140625	0.998749016013915\\
0.53046875	0.997647043935033\\
0.54296875	0.996265218958489\\
0.55390625	0.994483545675228\\
0.56328125	0.992376594891443\\
0.571875	0.98983885633677\\
0.5796875	0.986910163294105\\
0.58671875	0.98366803690288\\
0.59375	0.979752102698065\\
0.6	0.975620819998495\\
0.60625	0.970796094055435\\
0.6125	0.96519787835623\\
0.61875	0.95874590975625\\
0.625	0.951360553359397\\
0.63125	0.942962956971311\\
0.6375	0.933475405502963\\
0.64375	0.92282295084538\\
0.65	0.910936675172344\\
0.65625	0.89775759946298\\
0.6625	0.883239562545208\\
0.66875	0.86735001636264\\
0.67578125	0.847810418192902\\
0.6828125	0.826497699918297\\
0.68984375	0.803414992377428\\
0.69765625	0.775715461515136\\
0.70546875	0.745907671248371\\
0.7140625	0.710785737232982\\
0.7234375	0.669863736072832\\
0.73359375	0.622752500326528\\
0.74453125	0.569203842027855\\
0.75703125	0.505063092126632\\
0.77265625	0.42168455363464\\
0.82578125	0.135265861023909\\
0.8328125	0.101213608308171\\
0.83828125	0.0772110670204007\\
0.84296875	0.058985507946604\\
0.846875	0.0457540061678665\\
0.85078125	0.0344533069603814\\
0.8546875	0.0251209349531811\\
0.8578125	0.0190293584567782\\
0.8609375	0.0140756898154095\\
0.8640625	0.0101497269781319\\
0.8671875	0.00712114249181339\\
0.8703125	0.00485036847641718\\
0.8734375	0.00319850277236222\\
0.87734375	0.00180683672745996\\
0.88203125	0.000835932175186516\\
0.88828125	0.000250264328813588\\
0.8984375	1.88333256603279e-05\\
0.95625	-0\\
1	-0\\
};
\addplot [color=black, dashed, forget plot]
  table[row sep=crcr]{%
0	0\\
0.175	2.34867398951621e-06\\
0.2	0.00514881288862612\\
0.225	0.08720711297234\\
0.25	0.513763558601893\\
0.275	0.909045478663762\\
0.3	0.986933822354289\\
0.325	0.999176525690553\\
0.35	0.999990992914201\\
0.475	0.999772961236622\\
0.5	0.999314655433536\\
0.525	0.998086307463236\\
0.55	0.995071757677994\\
0.575	0.988354580242635\\
0.6	0.974815108079359\\
0.625	0.950121682519233\\
0.65	0.909303049858424\\
0.675	0.848254039015601\\
0.7	0.765113571920161\\
0.725	0.661267620413087\\
0.75	0.539904343969883\\
0.775	0.408515386432242\\
0.8	0.267124358625273\\
0.825	0.131046519350334\\
0.85	0.050450017230097\\
0.875	0.0149233020081314\\
0.9	0.00157578754075893\\
0.925	-0\\
1	0\\
};
\addplot [color=black,dashdotted, forget plot]
  table[row sep=crcr]{%
0	1\\
1	1\\
};
\addplot [color=black,dashdotted, forget plot]
  table[row sep=crcr]{%
0	0\\
1	0\\
};
\addplot [color=black,dotted, forget plot]
  table[row sep=crcr]{%
0	3.46392521104466e-05\\
0.125	1.53266403988273e-05\\
0.15	0.000345262091693277\\
0.175	-0.00214944920211257\\
0.2	0.00526581803432724\\
0.225	0.0884558976911836\\
0.25	0.513972295133306\\
0.275	0.908919179123845\\
0.3	0.98702998317204\\
0.325	0.999398631073434\\
0.35	1.00003491671549\\
0.475	0.999786086298343\\
0.5	0.999328270659967\\
0.525	0.998087786153036\\
0.55	0.995038471062434\\
0.575	0.988276192175517\\
0.6	0.974724777223462\\
0.625	0.950104314979853\\
0.65	0.909373811256746\\
0.675	0.848259057138382\\
0.7	0.764879012922437\\
0.725	0.660873647686457\\
0.75	0.54025296888248\\
0.775	0.40871806584276\\
0.8	0.26687840897118\\
0.825	0.131001706098345\\
0.85	0.0507731647872509\\
0.875	0.0158740897204337\\
0.9	0.00221850336925744\\
0.925	-0.00105176732009449\\
1	3.46392521104466e-05\\
};
\end{axis}
\end{tikzpicture}%
\caption{Numerical solution with and without using the linear scaling limiter for CWENO3 with $h=0.025$.}
\label{fig:maxprinciple}
\end{figure}
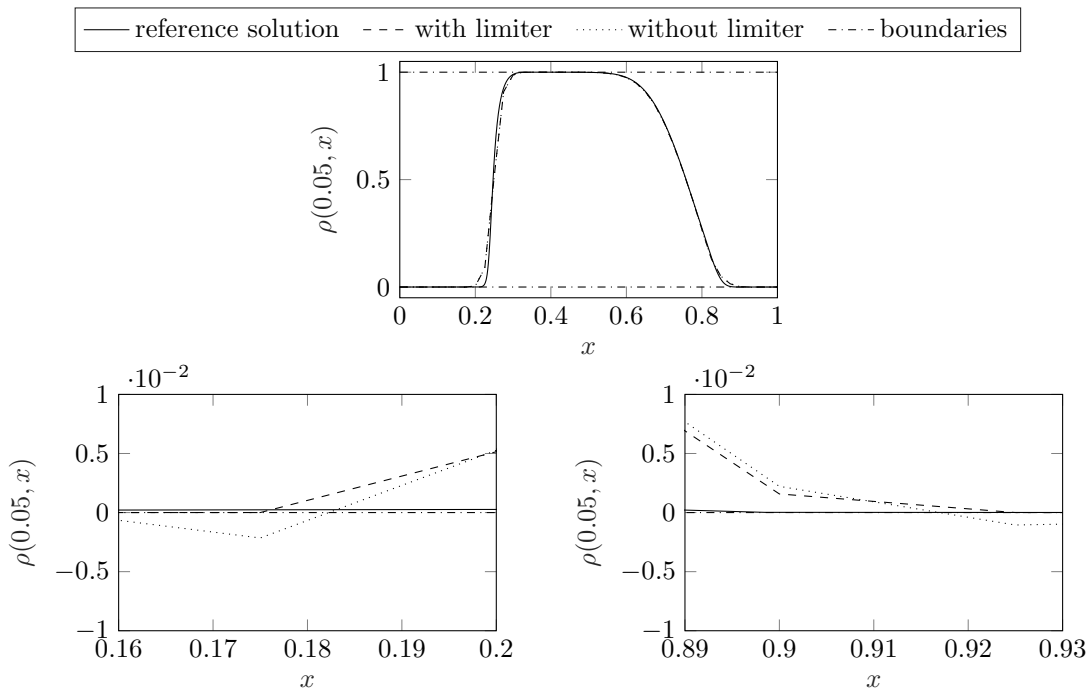

\section{Conclusion}\label{sec:conclusion}
Within this work we have presented high-order CWENO schemes for a class of non-local conservation laws. In contrast to the recently proposed Discontinuous Galerkin and Finite Volume WENO schemes, the proposed schemes neither require a very restrictive CFL condition nor an additional reconstruction step since the underlying CWENO reconstructions already provide stable high-order approximations for arbitrary points. By considering similar test cases as in~\cite{ChalonsGoatinVillada2018}, we demonstrated that the proposed CWENO schemes are able to accurately resolve solutions containing discontinuities as well as that they achieve their formal convergence rates for sufficiently smooth solutions.

Further, by making use of the well-known linear scaling limiter of~\cite{ZhangShu2010,ZhangShu2011}, we proved a maximum principle for the approximate solutions of suitable non-local conservation laws under a refined CFL condition. To guarantee the maximum principle, SSP time integration methods (with non-negative coefficients) must be applied. Within the class of Runge-Kutta methods, those are only available up to order four. For higher orders, we applied appropriate SSP two-step Runge-Kutta methods~\cite{ketcheson2011strong}. For suitably chosen time step sizes, the proposed CWENO schemes achieved high convergence rates within a test case where the rescaling due to the scaling limiter is permanently active.

Altogether, CWENO schemes seem to be a very promising class of schemes for non-local conservation laws. Within future work we intend to consider multi-dimensional problems as well as networks governed by non-local partial differential equations.

\bibliographystyle{siam}
\bibliography{CWENO_nonlocal}

\begin{thebibliography}{10}

\bibitem{aggarwal2015nonlocal}
{\sc A.~Aggarwal, R.~M. Colombo, and P.~Goatin}, {\em Nonlocal systems of
  conservation laws in several space dimensions}, SIAM J. Numer. Anal., 53
  (2015), pp.~963--983.

\bibitem{amorim2015numerical}
{\sc P.~Amorim, R.~M. Colombo, and A.~Teixeira}, {\em On the numerical
  integration of scalar nonlocal conservation laws}, ESAIM Math. Model. Numer.
  Anal., 49 (2015), pp.~19--37.

\bibitem{armbruster2006continuum}
{\sc D.~Armbruster, D.~E. Marthaler, C.~Ringhofer, K.~Kempf, and T.-C. Jo},
  {\em A continuum model for a re-entrant factory}, Oper. Res., 54 (2006),
  pp.~933--950.

\bibitem{betancourt2011nonlocal}
{\sc F.~Betancourt, R.~B\"urger, K.~H. Karlsen, and E.~M. Tory}, {\em On
  nonlocal conservation laws modelling sedimentation}, Nonlinearity, 24 (2011),
  pp.~855--885.

\bibitem{blandin2016well}
{\sc S.~Blandin and P.~Goatin}, {\em Well-posedness of a conservation law with
  non-local flux arising in traffic flow modeling}, Numer. Math., 132 (2016),
  pp.~217--241.

\bibitem{Butcher2008}
{\sc J.~C. Butcher}, {\em Numerical Methods for Ordinary Differential
  Equations}, John Wiley \& Sons, Ltd., Chichester, second~ed., 2008.

\bibitem{ChalonsGoatinVillada2018}
{\sc C.~Chalons, P.~Goatin, and L.~M. Villada}, {\em High-order numerical
  schemes for one-dimensional nonlocal conservation laws}, SIAM J. Sci.
  Comput., 40 (2018), pp.~A288--A305.

\bibitem{ChiarelloGoatin2017}
{\sc F.~A. Chiarello and P.~Goatin}, {\em Global entropy weak solutions for
  general non-local traffic flow models with anisotropic kernel}, ESAIM: M2AN,
  52 (2018), pp.~163--180.

\bibitem{colombo2012class}
{\sc R.~M. Colombo, M.~Garavello, and M.~L\'ecureux-Mercier}, {\em A class of
  nonlocal models for pedestrian traffic}, Math. Models Methods Appl. Sci., 22
  (2012), p.~1150023.

\bibitem{colombo2011control}
{\sc R.~M. Colombo, M.~Herty, and M.~Mercier}, {\em Control of the continuity
  equation with a non local flow}, ESAIM Control Optim. Calc. Var., 17 (2011),
  pp.~353--379.

\bibitem{CraveroPuppoSempliceVisconti2017}
{\sc I.~Cravero, G.~Puppo, M.~Semplice, and G.~Visconti}, {\em {CWENO:
  Uniformly accurate reconstructions for balance laws}}, Math. Comp., 87
  (2018), pp.~1689--1719.

\bibitem{FriedrichKolbGoettlich2018}
{\sc J.~Friedrich, O.~Kolb, and S.~Göttlich}, {\em {A Godunov type scheme for
  a class of LWR traffic flow models with non-local flux}}, Netw. Heterog.
  Media, 13 (2018), pp.~531--547.

\bibitem{GoatinScialanga2016}
{\sc P.~Goatin and S.~Scialanga}, {\em Well-posedness and finite volume
  approximations of the {LWR} traffic flow model with non-local velocity},
  {Netw. Heterog. Media}, 11 (2016), pp.~107--121.

\bibitem{gottlich2014modeling}
{\sc S.~G{\"o}ttlich, S.~Hoher, P.~Schindler, V.~Schleper, and A.~Verl}, {\em
  Modeling, simulation and validation of material flow on conveyor belts},
  Applied mathematical modelling, 38 (2014), pp.~3295--3313.

\bibitem{GottliebShu1998}
{\sc S.~Gottlieb and C.-W. Shu}, {\em Total variation diminishing
  {R}unge-{K}utta schemes}, Math. Comp., 67 (1998), pp.~73--85.

\bibitem{JiangShu1996}
{\sc G.-S. Jiang and C.-W. Shu}, {\em {Efficient Implementation of Weighted ENO
  Schemes}}, J. Comput. Phys., 126 (1996), pp.~202--228.

\bibitem{ketcheson2011strong}
{\sc D.~I. Ketcheson, S.~Gottlieb, and C.~B. Macdonald}, {\em Strong stability
  preserving two-step {R}unge--{K}utta methods}, SIAM J. Numer. Anal., 49
  (2011), pp.~2618--2639.

\bibitem{Kolb2014}
{\sc O.~Kolb}, {\em On the {F}ull and {G}lobal {A}ccuracy of a {C}ompact
  {T}hird {O}rder {WENO} {S}cheme}, SIAM J. Numer. Anal., 52 (2014),
  pp.~2335--2355.

\bibitem{Kolb2016}
\leavevmode\vrule height 2pt depth -1.6pt width 23pt, {\em {On the Full and
  Global Accuracy of a Compact Third Order WENO Scheme: Part~II}}, in Numerical
  Mathematics and Advanced Applications ENUMATH 2015, B.~Karas{\"o}zen,
  M.~Manguo{\u{g}}lu, M.~Tezer-Sezgin, S.~G{\"o}ktepe, and {\"O}.~U{\u{g}}ur,
  eds., Springer International Publishing, 2016, pp.~53--62.

\bibitem{RuuthSpiteri2002}
{\sc S.~J. Ruuth and R.~J. Spiteri}, {\em Two barriers on
  strong-stability-preserving time discretization methods}, J. Sci. Comput., 17
  (2002), pp.~211--220.

\bibitem{SpiteriRuuth2002}
{\sc R.~J. Spiteri and S.~J. Ruuth}, {\em A new class of optimal high-order
  strong-stability-preserving time discretization methods}, SIAM J. Numer.
  Anal., 40 (2002), pp.~469--491.

\bibitem{ZhangShu2010}
{\sc X.~Zhang and C.-W. Shu}, {\em On maximum-principle-satisfying high order
  schemes for scalar conservation laws}, J. Comput. Phys., 229 (2010),
  pp.~3091--3120.

\bibitem{ZhangShu2011}
\leavevmode\vrule height 2pt depth -1.6pt width 23pt, {\em
  Maximum-principle-satisfying and positivity-preserving high-order schemes for
  conservation laws: survey and new developments}, Proc. R. Soc. Lond. Ser. A
  Math. Phys. Eng. Sci., 467 (2011), pp.~2752--2776.

\end{thebibliography}

\end{document}